\documentclass{article}
\usepackage{amssymb,amsmath}
\usepackage{amsthm}
\usepackage{hyperref}
\usepackage{amscd}
\newtheorem{theorem}{Theorem}
\newtheorem{proposition}[theorem]{Proposition}

\theoremstyle{definition}

\newtheorem*{remark}{Remark}
\DeclareMathOperator{\Gal}{\mathrm{Gal}}
\DeclareMathOperator{\GalQ}{\Gal(\overline{\mathbf{Q}}/\mathbf{Q})}
\DeclareMathOperator{\GL}{\mathrm{GL}}
\DeclareMathOperator{\GU}{\mathrm{GU}}
\DeclareMathOperator{\PGL}{\mathrm{PGL}}
\DeclareMathOperator{\PGU}{\mathrm{PGU}}
\DeclareMathOperator{\PU}{\mathrm{PU}}
\DeclareMathOperator{\PSL}{\mathrm{PSL}}

\DeclareMathOperator{\PSU}{\mathrm{PSU}}

\DeclareMathOperator{\Qbar}{\overline{\mathbf{Q}}}

\DeclareMathOperator{\et}{\mathrm{\acute{e}t}}
\DeclareMathOperator{\SlU}{\mathrm{S}_{\mathit{l}}\mathrm{U}}
\DeclareMathOperator{\SlL}{\mathrm{S}_{\mathit{l}}\mathrm{L}}
\begin{document}
	\author{Stepan Nesterov}
	\title{Simple Lie Groups of type $A_n$ as Galois groups over $\mathbf{Q}$}
	\maketitle
	\begin{abstract}
		In this paper, we utilize our previous results on mod $p$ monodromy of cyclic coverings of the projective line to realize a large series of groups of the form $\PSL(n,q)$ and $\PSU(n,q)$ as Galois groups over $\mathbf{Q}$. We achieve for the first time a fully explicit infinite series of such groups where simultaneously the field can have arbitrarily large degree over the prime field and the group does not coincide with $\PGL(n,q)$ or $\PGU(n,q)$, respectively. 
	\end{abstract}
	\section{Introduction}
	The Inverse Galois Problem is to find, for a given finite group $G$, a Galois extension of $\mathbf{Q}$ with Galois group $G$. We use \cite{malle} as a general reference regarding the Inverse Galois Problem. In view of the classification of finite simple groups, it is natural to attempt to solve the Inverse Galois Problem for the simple groups frirst. Despite a lot of progress, for many of these groups the Inverse Galois problem remains open. \par 
	To solve the Inverse Galois Problem for the alternating groups $A_n$ is not that difficult, and was already done by Hilbert. One can even give explicit examples: indeed, for any $n$, the splitting field of a polynomial $x^n - x - 1$ has Galois group $S_n$; the polynomial $1+x+\frac{x^2}{2} + \ldots \frac{x^n}{n!}$ has Galois group $S_n$ if $n$ is not a multiple of $4$, and $A_n$ otherwise. See \cite{alternating} for a large family of explicit polynomials with Galois group $A_n$. \par 
	One of the most productive methods to solve the Inverse Galois Problem is the rigidity method, originally presented in \cite{rigidity}. In addition to recovering the result for $S_n$ or $A_n$, it allows us to solve the Inverse Galois Problem for a lot of simple groups of Lie type over prime fields $\mathbf{F}_p$, and all the sporadic groups with the exception of $M_{23}$ and $M_{24}$. See \cite[Chapter II]{malle} for an extensive list of groups where the rigidity method applies. \par 
	This leaves us with a large class of simple groups of Lie type over finite fields $\mathbf{F}_q$, where $q$ is not a prime, but a prime power. In \cite[Chapter III.10]{malle} a generalisation of the rigidity method is discussed, which allows us to cover many simple groups associated to symplectic and orthogonal groups over finite fields. An alternative approach is to prove that Galois representations attached to automorphic forms often have full image \cite{khare} \cite{wiese}. However these results are ineffective in the sense that it you do not know which field $\mathbf{F}_{p^k}$ you get as a field of coefficients, only that $k$ can be arbitrality large. \par 
	In this paper we give, for the first time, a realisation of an infinite set of simple groups of the form $\PSL(n, q)$ and $\PSU(n, q)$ as Galois groups over $\mathbf{Q}$, where simultaneously $n$ can be arbitrarily large, $\mathbf{F}_q$ can have arbitraily large degree over the prime field, and the condition on $(n,q)$ to arise as a Galois group over $\mathbf{Q}$ is complicated, but completely explicit. We achieve this by considering the action of the absolute Galois group $G_{\mathbf{Q}}$ on the etale cohomology with finite coefficients of cyclic coverings of $\mathbf{P}^1$. The deck transformation of the curve coming from the covering map to $\mathbf{P}^1$ is what endows the cohomology with the structure of an $\mathbf{F}_q$-vector space. \par 
	The main result of this paper is: 
	\begin{theorem} \label{final}
		Let $l$ be an odd prime, and let $n$ be a natural number, not congruent to $-2$ modulo $l$. Let $e = \mathrm{ord} \ p \mod l$. Then:
		\begin{itemize}
			\item If $\frac{l-1}{e}$ is even, then $\PSL((l-1)n-2,p^e)$ is a Galois group over $\mathbf{Q}$.
			\item If $\frac{l-1}{e}$ is odd and, in addition to all the previous assumptions, either $e$ is a multiple of $4$ or if $2^r$ is the largest power of $2$ dividing $p+1$, then $\frac{l-1}{2}n \equiv 1 \mod 2^r$, then $\PSU((l-1)n - 2, p^{\frac{e}{2}})$ is a Galois group over $\mathbf{Q}$.
		\end{itemize}
	\end{theorem}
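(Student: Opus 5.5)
We would realize the group as the projectivized image of the mod $p$ Galois representation on the Jacobian of a cyclic cover. Concretely, we take the superelliptic curve $C: y^l = f(x)$ over $\mathbf{Q}$, with $f$ a polynomial of degree $N$ chosen so that the $l$-cyclic cover of $\mathbf{P}^1$ is branched at $N$ or $N+1$ points. We arrange the degree so that the $\mathbf{Z}[\zeta_l]$-rank of $H^1$ equals $n-2$ plus the correct correction; the dimension $(l-1)n-2$ in the statement suggests working with the full $\mathbf{F}_p[\mu_l]$-module, or rather its pieces. The deck transformation $y\mapsto \zeta y$ makes $H^1_{\et}(C_{\Qbar},\mathbf{F}_p)$ a module over $\mathbf{F}_p[T]/(1+T+\dots+T^{l-1})\cong \prod \mathbf{F}_{p^e}$, with $(l-1)/e$ factors. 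Since the automorphism is not defined over $\mathbf{Q}$ unless $\mu_l\subset\mathbf{Q}$, we instead consider the action of $G_{\mathbf{Q}(\zeta_l)}$, or of a suitable subgroup. Galois permutes the isotypic factors through $\Gal(\mathbf{Q}(\zeta_l)/\mathbf{Q})$ acting on $\mathbf{F}_p$-embeddings. Whether each factor $V_i$ is paired with itself or with a different factor by the Weil pairing decides between the unitary and linear cases. If $-1\in\langle p\rangle \subset \mathbf{F}_l^\times$, which is the case exactly when $(l-1)/e$ is odd with $e$ even, then the pairing is Hermitian over $\mathbf{F}_{p^{e}}/\mathbf{F}_{p^{e/2}}$. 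Otherwise $V_i$ is dual to a different $V_j$ and no form constrains it.

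The key input is the earlier monodromy result: the geometric monodromy of the family of such covers, as the branch points vary, contains $\SlL$ or $\SlU$ of the relevant module. We would then specialize the branch locus over $\mathbf{Q}$ by Hilbert irreducibility so that the arithmetic image equals the full generic monodromy. Because the family is defined over $\mathbf{Q}$, we can take the ``conjugate'' construction. We would use Weil restriction, or descend $V_i$ via the fixed field of the stabilizer of $i$, to get a $G_{\mathbf{Q}}$-extension whose Galois group projects onto the image in $\PGL(V_i)$ or $\PGU(V_i)$. The image lies between $\PSL$ and $\PGL$ (resp.\ $\PSU$ and $\PGU$). To land exactly on the simple group, we must cut down by the determinant/similitude character. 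The monodromy contains $\SlL$, and the remaining part of the image is governed by the determinant, which is a power of the mod $p$ cyclotomic character times characters of $\mu_l$. We would therefore pass to the kernel of this character's projective effect, taking the fixed field of the determinant modulo $n$-th powers. To keep the extension over $\mathbf{Q}$, we need this determinant to be trivial on the image modulo scalars.

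The main obstacle is this determinant/scalar computation. We must show that the image of $G_\mathbf{Q}$ in $\PGL$ is exactly $\PSL$, i.e.\ that the determinant of the image is an $N$-th power of a scalar, with $N=(l-1)n-2$. In the linear case we would compute $\det$ on $V_i$ via the Weil pairing and Frobenius action as a power of the cyclotomic character. The hypothesis $n\not\equiv -2 \pmod l$ ensures the relevant exponent is invertible, so that scalars can absorb it. In the unitary case the analogous computation produces a condition modulo the $2$-part of $p+1$, since the scalars of $\GU$ have norm in $\mu_{p^{e/2}+1}$. This matches the hypothesis $\frac{l-1}{2}n\equiv 1 \pmod{2^r}$, or its automatic satisfaction when $4\mid e$. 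I would first try to compute the determinant explicitly, then check the index $[\mathrm{image}:\PSL]$ divides a gcd that the hypotheses force to be $1$.
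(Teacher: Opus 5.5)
There is a genuine gap at the first step: your construction never produces an $\mathbf{F}_q$-linear representation of $G_{\mathbf{Q}}$.

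\textbf{The field of definition.} For the Kummer model $y^l=f(x)$ with $f\in\mathbf{Q}[x]$ one has $\sigma T\sigma^{-1}=T^{\chi(\sigma)}$, where $\chi$ is the mod $l$ cyclotomic character. Consequences:
(i) $G_{\mathbf{Q}}$ permutes the factors $V_{\mathfrak m}=H^1/\mathfrak m$.
(ii) The stabilizer of $V_{\mathfrak m}$ is $G_K$, where $K\subset\mathbf{Q}(\zeta_l)$ is the decomposition field of $p$, so $[K:\mathbf{Q}]=(l-1)/e$.
(iii) $G_K$ acts on $V_{\mathfrak m}$ only semilinearly. Its projective image lies in $\mathrm{P}\Gamma\mathrm{L}(V_{\mathfrak m})$ and maps onto $\Gal(\mathbf{F}_{p^e}/\mathbf{F}_p)$, so only $G_{\mathbf{Q}(\zeta_l)}$ acts linearly.
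Neither ``descending via the stabilizer'' nor Weil restriction turns a $G$-extension of $K$ or $\mathbf{Q}(\zeta_l)$ into a $G$-extension of $\mathbf{Q}$; inducing only gives a wreath-type group. As written, you get these groups over $\mathbf{Q}(\zeta_l)$ at best.

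The missing idea is the paper's descent. Use $\mathbf{Z}/l$-covers rather than $\mu_l$-torsors, branched along $n$ full $\Gal(\mathbf{Q}(\zeta_l)/\mathbf{Q})$-orbits, with local monodromy $\gamma_{\sigma(1)}\mapsto\varepsilon(\sigma)^{-1}$ on each orbit. This homomorphism $\pi_1^{ab}\to\mathbf{Z}/l$ is $G_{\mathbf{Q}}$-invariant. Hence the curve and $T$ are defined over $\mathbf{Q}$, $G_{\mathbf{Q}}$ commutes with $T$, and each $V_{\mathfrak m}$ is an $\mathbf{F}_q$-linear $G_{\mathbf{Q}}$-module. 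Its dimension is $(l-1)n-2$, since there are $(l-1)n$ branch points and the $\mathbf{Z}[\zeta_l]$-rank is $r-2$; this is the origin of the dimension in the statement, not a ``piece'' of the full $\mathbf{F}_p[\mu_l]$-module. The parameter space is open in $\mathrm{Res}_{\mathbf{Q}(\zeta_l)/\mathbf{Q}}\mathbf{A}^n$, hence rational, so Hilbert irreducibility over $\mathbf{Q}$ applies.

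\textbf{The determinant.} The pairing and Frobenius weights only fix the cyclotomic part. With $N=(l-1)n-2$, the paper gets $\nu=\varepsilon$ and $\det\rho=\varepsilon^{N/2}\psi$ (unitary) or $\varepsilon^{N}\psi$ (linear), with $\psi$ of order dividing $2l$. The hypothesis $n\not\equiv -2\pmod l$, i.e.\ $l\nmid N$, absorbs only the $\mu_l$-part of $\psi$ (Theorems~\ref{unitary} and~\ref{linear}). A quadratic part $\delta$ of $\psi$ is a real obstruction because $N$ is even; for example, $-1$ is nontrivial in $\mu_{q+1}/\mu_{q+1}^{N}$. Without excluding $\delta$ you only get $\PSL$ or $\PSU$ of index at most $2$ in the image. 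You describe the finite part as $\mu_l$-valued, so you never see $\delta$, and no pairing or weight computation can detect it.

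The paper kills $\delta$ in Propositions~\ref{odd} and~\ref{odd'} in three steps:
(i) $\delta$ is constant over the connected family (Proposition~\ref{indep}), because the geometric determinant lies in $\mu_l$.
(ii) One member with odd-order finite part is the degree-$l$ subcover of $X_1(l^2)\to X_0(l^2)$ ramified at the $l-1$ non-rational cusps. Its $H^1$ is a sum of weight-$2$ newform representations with determinant $\varepsilon$ times an odd-order Nebentypus.
(iii) Semistable degeneration (Proposition~\ref{degen}) transports this to covers of $\mathbf{P}^1$ branched along one orbit, then several orbits, changing determinants only by powers of $\varepsilon$.
You need some input of this kind.

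\textbf{A smaller point.} $-1\in\langle p\rangle$ holds exactly when $e$ is even. For $l\equiv 1\pmod 4$ this is strictly weaker than $(l-1)/e$ odd; for example $l=5$, $p\equiv -1\pmod 5$ gives $e=2$ and $(l-1)/e=2$. So your Hermitian criterion does not coincide with the statement's dichotomy. You would have to reconcile it with the cited monodromy theorem before claiming the linear case for such $p$.
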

	For various choices of $l$ and $p$, we have the following corollaries:
	\begin{itemize}
		\item For a prime $p = 3k + 1$ and $n$ not divisible by $3$, $\PSL(2n,p)$ is a Galois group over $\mathbf{Q}$. Under these congruence conditions, the corresponding groups $\PGL(2n,p)$ are rigid, so under the additional assumption $\mathrm{gcd}(n, p - 1) \le 2$, the simple groups $\PSL(n, p-1)$ were previously obtained as Galois groups over $\mathbf{Q}$ by the rigidity method \cite[Theorem III.10.3]{malle}. In the case $n=2$, the Inverse Galois Problem is solved in \cite{zywina}.
		\item For a prime $p = 5k-1$, and $n$ odd, but not divisible by $5$, $\PSL(2n, p^2)$ is a Galois group over $\mathbf{Q}$. In particular, $\PSL(2, p^2)$ is a Galois group over $\mathbf{Q}$ for such $p$. This complements a result of Mestre \cite{mestre}, where $\PSL(2, p^2)$ is realised as a Galois group over $\mathbf{Q}$ for $p = 5k \pm 2$.
		\item For a prime $p = 5k + 1$, and $n$ odd but not divisible by $5$, $\PSL(2n, p)$ is a Galois group over $\mathbf{Q}$.
		\item If $p$ and $l$ are two odd primes, and $p$ has multiplicative order $\frac{l-1}{2}$ mod $l$, then $\PSL(l-3, p^{\frac{l-1}{2}})$ is a Galois group over $\mathbf{Q}$. So this method allows us to obtain groups $\PSL(n,q)$ where $q$ is an arbitrarily large power of a prime, but the rank of the group will also become arbitrarily large.
		\item For a prime $p = 12k +7$ and $n$ not divisible by $3$, $\PSU(2n ,p)$ is a Galois group over $\mathbf{Q}$. Again, these congruence conditions also arise when applying the rigidity method.  \cite[Theorem III.10.3]{malle}
		\item For a prime $p=28k+13$ and $n$ odd, but not congruent to $-2$ mod $7$, $\PSU(6n-2 ,p)$ is a Galois group over $\mathbf{Q}$. 
		\item For a prime $p = 28k+5, 28k+17$, and $n$ odd, but not congruent to $-2$ mod $7$, $\PSU(6n-2 ,p^3)$ is a Galois group over $\mathbf{Q}$.
		\item If $p$ and $l$ are two odd primes, such that $l = 4m+1$, and $p$ is a primitive root mod $l$, then $\PSU(l-3, p^{\frac{l-1}{2}})$ is a Galois group over $\mathbf{Q}$.
	\end{itemize}
	\section{Proof Outline}
	In a previous article, we studied the monodromy action on cohomology of cyclic coverings of $\mathbf{P}^1$ and proved the following theorem:	
	\begin{theorem}
		If $n \geq l$, then the image of the monodromy representation of the pure braid group on the cohomology of cyclic covering of $\mathbf{P}^1$ branched at $n$ points is either the group $\SlU(n-1,q) := \{A \in \GL(n-1,q^2) : A \cdot \mathrm{Fr}_q(A)^t = 1, \det A \in \mu_l \}$ or $\SlL(n-1,q) := \{A \in \GL(n-1,q) : \det A \in \mu_l\}$, depending on whether or not $\frac{l-1}{\mathrm{ord (p \mod l)}}$ is odd or even.
	\end{theorem}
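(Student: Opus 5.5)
This statement is the main theorem of our earlier article, quoted here so that the present paper can rely on it. If we had to reprove it, we would follow the standard strategy for big monodromy of cyclic covers.

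\emph{Setup.} Let $C_t : y^l = \prod_{i=1}^n (x - t_i)$ be the cyclic cover over the configuration space of $n$ distinct points, and let $\zeta$ be the deck transformation. The local system $H^1(C_t,\mathbf{F}_p)$ decomposes under $\mathbf{F}_p[\zeta] = \mathbf{F}_p[x]/(x^l-1)$. We discard the trivial eigenspace and obtain a module over $\mathbf{F}_p[x]/\Phi_l(x)$. This ring is a product of $\frac{l-1}{e}$ copies of $\mathbf{F}_{p^e}$, where $e = \mathrm{ord}(p \bmod l)$. Each factor $V$ is an $\mathbf{F}_{p^e}$-vector space of dimension $n-1$ when $l \nmid n$; the precise dimension count follows from Chevalley--Weil.

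The cup product pairs the $\chi$-eigenspace with the $\chi^{-1}$-eigenspace. Two cases arise.
\begin{itemize}
\item If $p^k \equiv -1 \bmod l$ for some $k$, which happens exactly when $\frac{l-1}{e}$ is odd, then $\chi$ and $\chi^{-1}$ lie in the same Frobenius orbit. The pairing then becomes a nondegenerate Hermitian form on $V$ relative to the involution $\mathrm{Fr}_q$ with $q = p^{e/2}$, so the monodromy lands in a unitary group.
\item Otherwise the pairing identifies $V$ with the dual of a different factor, and no form is preserved on $V$, so the monodromy lands in $\GL$.
\end{itemize}
The determinant condition $\det A \in \mu_l$ comes from computing the image of the standard generators. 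Each generator of the pure braid group is a Dehn twist squared; it acts on $V$ as a complex reflection whose eigenvalue is a power of $\zeta$. Its determinant is therefore an $l$-th root of unity.

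\emph{Generation.} We would show the image contains enough transvections or pseudo-reflections to generate $\mathrm{SL}(V)$ or $\mathrm{SU}(V)$. The braid generators $\sigma_i$ act on $V$ as pseudo-reflections with eigenvalue $-\zeta^{\pm1}$. These form the reduction mod $p$ of the Deligne--Mostow/McMullen unitary reflection group. Commutators and $l$-th powers of pure braids give genuine transvections. For instance, $\sigma_i^{2l}$ acts unipotently on $V$, and so does the image of a loop around two points after applying the $l$-fold power. The subgroup generated by transvections is irreducible; we would check irreducibility by restricting to a subconfiguration and using induction on $n$. By McLaughlin's classification of irreducible groups generated by transvections (in characteristic $p$ odd), it is then $\mathrm{SL}$, $\mathrm{SU}$, or $\mathrm{Sp}$ over a subfield. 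We exclude $\mathrm{Sp}$ by the Hermitian-versus-nonself-dual analysis above. We exclude smaller subfields by computing traces of the images: these generate $\mathbf{F}_p(\zeta) = \mathbf{F}_{p^e}$. The full image is then squeezed between $\mathrm{SL}$ (or $\mathrm{SU}$) and the determinant-$\mu_l$ group.

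\emph{Main obstacle.} The hardest step is producing enough transvections and establishing irreducibility mod $p$, since irreducibility of the complex reflection representation does not descend automatically. The hypothesis $n \ge l$ is where we would do this. It guarantees the base case of the induction, where we would use a degeneration argument: collide groups of $l$ points so that the cover acquires a smooth component, which lets us bootstrap from smaller $n$.
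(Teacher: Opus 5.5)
The paper gives no proof of this statement; it is quoted from the earlier article, so your sketch has to stand on its own. It has two concrete defects.

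\textbf{First, the dichotomy is misstated.} Your structural criterion is correct: $V$ carries a nondegenerate Hermitian form exactly when $\chi^{-1}$ lies in the Frobenius orbit of $\chi$, i.e.\ when $-1\in\langle p\rangle\subset(\mathbf{Z}/l)^{\times}$ (equivalently, when complex conjugation fixes $\mathfrak{m}$). Since $(\mathbf{Z}/l)^{\times}$ is cyclic and $-1$ is its unique involution, this holds if and only if $e=\mathrm{ord}(p \bmod l)$ is even. By contrast, $\frac{l-1}{e}$ odd means that $p$ is a quadratic non-residue mod $l$.

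These two conditions agree for $l\equiv 3 \pmod 4$ but not for $l\equiv 1 \pmod 4$. Take $l=5$ and $p\equiv -1 \pmod 5$. Then $e=2$ and $\frac{l-1}{e}=2$ is even, yet $p\equiv -1$. So your own argument puts the monodromy inside the unitary group $\mathrm{U}(n-1,p)\subset \GL(n-1,p^2)$, and the image cannot be $\SlL(n-1,p^2)$. Your sentence ``which happens exactly when $\frac{l-1}{e}$ is odd'' is therefore false. In this range the dichotomy as quoted cannot hold; the correct parameter is the parity of $e$.

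\textbf{Second, the generation step rests on transvections you do not have.} A half-twist $\sigma_i$ is the monodromy of the $A_{l-1}$ singularity $y^l=x^2$. It acts on $V$ as a \emph{semisimple} pseudo-reflection with eigenvalue $-\zeta^{\pm1}$, a primitive $2l$-th root of unity. Hence $\sigma_i^{2l}$, and the $l$-th power of a loop around two points (a conjugate of $\sigma_i^{2l}$), act as the identity, not as transvections. You also give no reason why commutators of such pseudo-reflections should be transvections; in general they are not.

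The transvections come instead from the Dehn twist about a simple closed curve enclosing $l$ branch points. The covering has trivial monodromy along such a curve, so the curve lifts to $l$ disjoint curves $\zeta^j\tilde\gamma$. The twist lifts to the product of their Dehn twists, which acts on $V$ as $x\mapsto x+c\,h(x,v)\,v$, where $v$ is the image of $\tilde\gamma$. This $v$ is isotropic because the lifts are disjoint, so the map is a genuine (unitary) transvection. This is exactly where $n\ge l$ is used; without it McLaughlin's theorem has nothing to act on.

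Finally, excluding $\mathrm{Sp}$ needs more than the absence of an obvious invariant form; note that $\mathrm{Sp}(N,q)\subset\mathrm{SU}(N,q)$. One way is to observe that the image contains pseudo-reflections with eigenvalue $-\zeta^{\pm1}\neq\pm1$. Such an element cannot normalize a symplectic group in dimension $>2$.
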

	The naive model of a cyclic covering of $\mathbf{P}^1$ with monodromy vector $(k_1, \ldots, k_n)$ branched at $(\lambda_1, \lambda_2, \ldots, \lambda_{n-1}, \infty)$ is the normalization of the projective closure of the affine curve $y ^ l = (x - \lambda_1)^{k_1} \ldots (x - \lambda_{n-1})^{k_{n-1}}$, with the deck transformation $(x, y) \mapsto (x, e^{\frac{2 \pi i}{l}} y)$. If we take $\lambda_i$ to be rational and consider the curve given by the same equation, then the deck transformation is not defined over $\mathbf{Q}$, which forces the corresponding field extension to not be normal. Therefore, the first step in the proof is to find the correct condition on $\lambda_i$ so that we can perform the Galois descent in such a way as to make everything, including the deck transformation, defined over $\mathbf{Q}$. \par 
	After this, we will have constructed an etale covering of the parameter space with geometric Galois group either $\SlU(n, q)$ or $\SlL(n, q)$. However, this covering will not be geometrically connected for the same reason that the modular curve $X(N)$ isn't. The Weil pairing is naturally values in roots of unity, so the action of the arithmetic fundamental group over $\mathbf{Q}$ will not preserve the Weil pairing on the nose, only up to scaling. \par 
	We will discover that in some situations, the projective image of the Galois representation is still the simple group $\PSL(n, q)$ or $\PSU(n, q)$. Recall that the image of an element in $\GL(n, q)$ is in $\PSL(n, q)$ if and only if its determinant is an $n$-th power. Using general facts from $p$-adic Hodge theory, we will see that the determinant of the Galois representation under consideration is $\varepsilon ^ {\frac{n}{2}} \psi$, where $\psi$ is a finite order character. Under some congruence conditions, we can have that any $\frac{n}{2}$-th power in $\mathbf{F}_q$ is automatically an $n$-th power, while $\mathrm{gcd}(n , q - 1) \neq 1$. \par 
	From our topological considerations, we see that the determinant of the monodromy action may be an arbitrary $l$-th root of unity. This implies that the "geometric part" of $\psi$ is of order $l$. The remainder of the proof is devoted to the fact that there is no Dirichlet character in the determinant coming from the difference between the arithmetic and geometric monodromy groups. Because this difference is independent of the particular point in the parameter space, it suffices to find a single example of a cyclic covering of curves of degree $l$ with the correct determinant. We give an example of a cyclic quotient of $X_0(l^2)$, whose cohomology is accessible by using modular forms, and find a degeneration of this example where the base becomes a union of rational curves. This allows us to find the required example where the base is $\mathbf{P}^1$, and complete the proof.
	
	\section{Galois descent}
	
	All of the following discussion is contained in \cite[Theorem I.4.11]{malle}, but we translate the statement into algebro-geometric language for the reader's convenience. See also \cite[Chapter 8]{serre}. \par 
	\begin{proposition} \cite[Corollary I.2.7]{malle}
		Let $x_1, \ldots, x_m \in \mathbf{P}^1(\overline{\mathbf{Q}})$ be such that the set $\{x_1, \ldots, x_m \}$ is $G_{\mathbf{Q}}$-invariant. By abuse of notation, for $\sigma \in G_{\mathbf{Q}}$ we will denote $\sigma(x_i)$ by $x_{\sigma(i)}$. Then the action of $G_{\mathbf{Q}}$ on $\pi_1^{ab}(\mathbf{P}^1_{\overline{\mathbf{Q}}} \setminus \{x_1, x_2, \ldots, x_m\})$ is given by $\sigma : \gamma_i \mapsto \gamma_{\sigma(i)}^{\varepsilon(\sigma)}$, where $\gamma_i$ is the small loop around $x_i$, and $\varepsilon : G_{\mathbf{Q}} \to \widehat{\mathbf{Z}} ^ {\times}$ is the cyclotomic character.
	\end{proposition}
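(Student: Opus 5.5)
The plan is to pass from the geometric fundamental group to its profinite abelianization and then compute the Galois action through the finite quotients $H^1(\,\cdot\,,\mu_N)$, i.e.\ through Kummer theory. Write $U = \mathbf{P}^1_{\overline{\mathbf{Q}}} \setminus \{x_1,\ldots,x_m\}$. By Riemann's existence theorem, $\pi_1^{ab}(U)$ is the free profinite abelian group on $\gamma_1,\ldots,\gamma_m$ modulo the single relation $\prod \gamma_i = 1$. Here the $\gamma_i$ are defined only up to conjugacy, but conjugacy disappears in the abelianization, so each $\gamma_i$ is a well-defined element. We may apply a M\"obius transformation defined over $\mathbf{Q}$; this commutes with the $G_{\mathbf{Q}}$-action, so we may assume $\infty \notin \{x_i\}$. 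Then it suffices to identify the action on every finite quotient $\pi_1^{ab}(U)\otimes \mathbf{Z}/N$.

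For the finite quotients, Kummer theory gives
\[ \mathrm{Hom}(\pi_1^{ab}(U), \mu_N) = H^1(U,\mu_N) \cong \mathcal{O}(U)^\times / (\mathcal{O}(U)^\times)^N. \]
This is $G_{\mathbf{Q}}$-equivariant because the Kummer sequence $1\to\mu_N\to\mathbf{G}_m\to\mathbf{G}_m\to 1$ is defined over $\mathbf{Q}$, and $\mathrm{Pic}(U)=0$. The unit group modulo constants is spanned by the functions $f_{ij} = (x-x_i)/(x-x_j)$.

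The key local computation is the pairing. The Kummer class of $f$ evaluated on $\gamma_k$ is $\gamma_k(\sqrt[N]{f})/\sqrt[N]{f}$, and this equals $\zeta_N^{\mathrm{ord}_{x_k}(f)}$, where $\zeta_N = e^{2\pi i/N}$ is fixed by the choice of orientation of the small loops. So the pairing $\langle \gamma_k, f\rangle = \zeta_N^{\mathrm{ord}_{x_k} f}$ is perfect.

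Next we apply $\sigma$. Since valuations transport, $\mathrm{ord}_{x_{\sigma(k)}}(\sigma f) = \mathrm{ord}_{x_k}(f)$, and equivariance of the pairing says $\langle \sigma\gamma, \sigma f\rangle = \sigma\langle\gamma,f\rangle$. Taking $f$ arbitrary in
\[ \langle \sigma\gamma_k, \sigma f\rangle = \sigma(\zeta_N^{\mathrm{ord}_{x_k} f}) = \zeta_N^{\varepsilon(\sigma)\,\mathrm{ord}_{x_{\sigma(k)}}(\sigma f)} = \langle \gamma_{\sigma(k)}^{\varepsilon(\sigma)}, \sigma f\rangle, \]
we see that, since $\sigma f$ ranges over all units and the pairing is perfect, $\sigma\gamma_k = \gamma_{\sigma(k)}^{\varepsilon(\sigma)}$ modulo $N$. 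Passing to the inverse limit over $N$ gives the statement.

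The main obstacle is justifying that the identification $\langle\gamma_k,f\rangle = \zeta_N^{\mathrm{ord}_{x_k}f}$ really reflects the algebraic definition of $\gamma_k$. In particular one has to check that the \emph{same} root of unity $\zeta_N$ occurs at every puncture, so that the comparison between the topological and the \'etale fundamental groups is compatible for all $k$ simultaneously. I would handle this by working locally at each $x_k$ with the inertia subgroup and the tame quotient of the local Galois group of $\overline{\mathbf{Q}}((x-x_k))$. That quotient is canonically $\widehat{\mathbf{Z}}(1)$, generated by $\gamma_k$ for the fixed choice of $\zeta_N$, and $G_{\mathbf{Q}}$ acts on it through $\varepsilon$, which is exactly the content of the formula above.
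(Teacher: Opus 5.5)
Your proof is correct, and it takes a different route from the standard one. The paper gives no argument of its own here; it only cites Malle--Matzat, and that reference argues locally.

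\textbf{The cited (local) argument.} One fixes canonical generators of the inertia groups in the maximal extension of $\overline{\mathbf{Q}}(x)$ unramified outside $\{x_i\}$. They are normalized by requiring them to multiply $(x-x_i)^{1/N}$ by $e^{2\pi i/N}$ for all $N$. One then checks directly on Puiseux expansions that a lift $\tilde\sigma$ of $\sigma$ conjugates the generator at a place over $x_i$ to the $\varepsilon(\sigma)$-th power of the generator at the image place over $\sigma(x_i)$. This gives a stronger, non-abelian statement: $\tilde\sigma\gamma_i\tilde\sigma^{-1}$ is conjugate to $\gamma_{\sigma(i)}^{\varepsilon(\sigma)}$ in the full fundamental group. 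That is the branch cycle argument.

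\textbf{Your (global) argument.} The Kummer pairing only sees $\pi_1^{ab}$. That is exactly what the statement asserts, and all the paper uses, since it only needs an invariant homomorphism $f:\pi_1^{ab}\to\mathbf{Z}/l$. In exchange, perfectness of the pairing lets you read off $\sigma\gamma_k$ from how divisors transport, $\mathrm{ord}_{\sigma(x_k)}(\sigma f)=\mathrm{ord}_{x_k}(f)$. You never have to choose places above $x_k$, track decomposition groups, or carry conjugating elements. The M\"obius reduction is harmless but unnecessary: if $\infty$ is a puncture, the units $x-x_i$ still span $\mathcal{O}(U)^\times$ modulo constants.

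\textbf{The shared local input.} Both routes rest on the same fact, which is less of an obstacle than you suggest. Analytic continuation of $\sqrt[N]{f}$ along a counterclockwise loop at $x_k$ multiplies it by $e^{2\pi i\,\mathrm{ord}_{x_k}(f)/N}$. The orientation is that of $\mathbf{C}$ at every puncture at once, so the same $\zeta_N$ appears for all $k$. Even a uniform convention flip $\zeta_N\leftrightarrow\zeta_N^{-1}$, coming from left versus right actions or monodromy versus deck transformations, would leave your computation and conclusion unchanged.

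\textbf{A minor imprecision.} In your last paragraph, $G_{\mathbf{Q}}$ does not act on the tame inertia at $x_k$ unless $x_k$ is rational. It carries the inertia at $x_k$ to the inertia at $\sigma(x_k)$. Your global pairing computation already handles this transport correctly, so nothing in the proof depends on that phrasing.
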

	In order to find a cyclic covering of $\mathbf{P}^1$ of prime degree $l$, which descends to $\mathbf{Q}$ together with its deck transformations, it suffices to construct a surjective homomorphism $f : \pi_1^{ab}(\mathbf{P}^1_{\overline{\mathbf{Q}}} \setminus \{x_1, x_2, \ldots, x_m\}) \to \mathbf{Z}/l$, which is invariant under this $G_{\mathbf{Q}}$-action. If we assume, without loss of generality, that $f(\gamma_1)=1$, and if $\sigma_i \in G_{\mathbf{Q}}$ is such that $\sigma_i(1)=i$, then $f(\gamma_i) = f(\gamma_i ^{\varepsilon(\sigma_i) \varepsilon(\sigma_i)^{-1}}) = f(\sigma_i(\gamma_1)^{\varepsilon(\sigma_i)^{-1}}) = \varepsilon(\sigma_i)^{-1}$. The simplest way we can guarantee this to be well-defined is to take $\{x_1, \ldots, x_m \}$ to consist of multiple full orbits of the Galois group $\Gal(\mathbf{Q}(\zeta_l)/\mathbf{Q})$, and define $f$ on each orbit by the formula just derived. \par 
	\begin{remark}
	This discussion can be slightly generalised as follows. For $\mu | l-1$, let $E_{\mu,l}$ be a unique subfield of $\mathbf{Q}(\zeta_l)$ with $[\mathbf{Q}(\zeta_l):E]=\mu$. Then, as above, we can descend a cyclic covering to $E_{\mu, l}$ if we take sets of ramification points in full orbit of $\Gal(\mathbf{Q}(\zeta_l)/E)$. Of course, this implies that the number of branch points must be divisible by $\mu$.
	\end{remark}
	We are now ready to calculate all the dimensions of the Galois representations which occur. Let's say that $\pi : X \to \mathbf{P}^1$ has $r$ ramification points.  \par 
	From the Riemann-Hurwitz formula, we obtain $2g=(l-1) (r-2)$. Observe that the deck tranformation of order $l$ endows the cohomology group $H^1(X; \mathbf{Z})$ with a $\mathbf{Z}[T]/(T^l - 1)$-action in such a way that the $T=1$ subgroup vanishes, because it reduces to $H^1(X;\mathbf{Z})^{T=1} = H^1(X / \langle T \rangle; \mathbf{Z}) = H^1(\mathbf{P}^1; \mathbf{Z}) = 0$. Therefore this action actually factors through $\mathbf{Z}[\zeta_l]$. To get the rank of $H^1(X;\mathbf{Z})$ as an $\mathbf{Z}[\zeta_l]$-module, we have to divide its $\mathbf{Z}$-rank by $l-1$, obtaining $\frac{2g}{l-1} = r - 2$. Therefore, if we reduce this cohomology group modulo a maximal ideal $\mathfrak{m}$ of $\mathbf{Z}[\zeta_l]$ of residue characteristic $p$, we obtain an $(r-2)$-dimensional vector space over $\mathbf{F}_{q}$, $q=p^{\mathrm{ord} (p \mod l)}$. Remebering that $r$ must be divisible by $\mu$, we finally obtain the following congruence conditions which are necessary for our setup to work:
	\begin{theorem} \label{main}
		Let $l$ be an odd prime, let $\mu$ be an integer dividing $l-1$, let $n$ be an integer, such that $n \equiv -2 \mod \mu$, and let $p$ be an odd prime, such that $p \neq l$.  
		\begin{itemize}
			\item If $\frac{l-1}{\mathrm{ord (p \mod l)}}$ is odd, then one can realize a group between  $\PGU(n,q)$ or $\PSU(n,q)$ with $q=p^{\frac{\mathrm{ord} (p \mod l)}{2}}$, as a Galois group over the field $E_{\mu,l}$.
			\item If $\frac{l-1}{\mathrm{ord (p \mod l)}}$ is even, then one can realize a group between  $\PGL(n,q)$ or $\PSL(n,q)$ with $q=p^{\mathrm{ord} (p \mod l)}$, as a Galois group over the field $E_{\mu,l}$.
		\end{itemize}
	\end{theorem}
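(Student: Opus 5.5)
The plan is to construct a family of cyclic degree-$l$ covers over an open parameter space defined over $E_{\mu,l}$, take the mod $\mathfrak{m}$ cohomology as a local system, and apply Hilbert irreducibility. Set $r = n+2$, so that $\mu \mid r$ because $n \equiv -2 \bmod \mu$. Write $r = \mu s$ and let $\Gal(\mathbf{Q}(\zeta_l)/E_{\mu,l})$, a cyclic group of order $\mu$, act on $\mu_l$. The parameter space $U$ over $E_{\mu,l}$ consists of $s$-tuples of points $t_1,\ldots,t_s$, each with residue field containing $\mathbf{Q}(\zeta_l)$. For instance, take the branch locus to be the union of the Galois orbits $\{\tau(t_j)\}$, where each $t_j$ ranges over an open subset of the Weil restriction $\mathrm{Res}_{\mathbf{Q}(\zeta_l)/E_{\mu,l}}\mathbf{A}^1$ on which all $r$ points are distinct. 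Over this family we define the cover using the $G$-invariant homomorphism $f$ from the preceding discussion, with $f(\gamma_{\tau x}) = \varepsilon(\tau)^{-1}$ on each orbit. The sum of the monodromy values over an orbit is $\sum_{a \in H}a$, where $H \subset (\mathbf{Z}/l)^\times$ is the subgroup of order $\mu$; if this sum is nonzero we must fix the total. I would handle this either by placing $\infty$ appropriately or by noting that the sum is $0$ mod $l$ whenever $\mu>1$. By the Proposition (Corollary I.2.7 of \cite{malle}) and Galois descent, this yields a relative curve $\mathcal{X}\to U$ over $E_{\mu,l}$ together with a deck automorphism $T$ defined over $E_{\mu,l}$.

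Next consider the lisse sheaf $R^1\pi_*\mathbf{F}_p \otimes_{\mathbf{F}_p[T]}\mathbf{Z}[\zeta_l]/\mathfrak{m}$, or rather the sum over the Frobenius orbit of $\mathfrak{m}$, which is canonically defined over $E_{\mu,l}$ because $T$ is. By the computation above it has rank $r-2=n$ over $\mathbf{F}_{q'}$ with $q'=p^{\mathrm{ord}(p \bmod l)}$. The geometric monodromy is given by the earlier Theorem from our previous article, namely $\SlU(n+1?)$. To get the indices right, I would apply it with $n+2$ branch points, which requires $n+2\ge l$ (if necessary, enlarge $s$ within the congruence class). Its image in $\PGL$ is $\PSU(n,q)$ or $\PSL(n,q)$, the case being decided by the parity of $(l-1)/\mathrm{ord}(p \bmod l)$. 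Here $\mathbf{F}_{q'}=\mathbf{F}_{q^2}$ in the unitary case, because $-1$ is a power of $p$ mod $l$ exactly when that quotient is odd, and $T\mapsto T^{-1}$ then coincides with Frobenius.

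The arithmetic monodromy of $\pi_1(U_{E_{\mu,l}})$ normalizes the geometric one. Its projective image therefore lies between $\PSL(n,q)$ and the normalizer, and it is contained in $\PGL(n,q)$ (resp.\ $\PGU(n,q)$). The key points are that the Weil pairing is preserved up to a scalar, which is the multiplier $\varepsilon$, and that $T$-linearity is preserved because $T$ is defined over the base field. Consequently no field automorphisms enter, and in the unitary case the Hermitian form is preserved up to a scalar, so the image lies in $\mathrm{GU}$.

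Finally, the projective image $\bar G$ cuts out a finite étale Galois cover of $U$. Since $U$ is rational over $E_{\mu,l}$, being an open subset of affine space (the Weil restriction), it is Hilbertian. Hilbert's irreducibility theorem then provides a specialization $u\in U(E_{\mu,l})$ whose fibre has the same Galois group $\bar G$, with $\PSL\subseteq \bar G\subseteq \PGL$. I expect the main obstacle to be the descent step: one must make sure that the branch divisor and the invariant $f$ are compatible with the required point at $\infty$ and with the total-sum condition, and that $n+2\ge l$ so the monodromy theorem applies. The remaining steps are formal.
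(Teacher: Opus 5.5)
Your route is the paper's. You use the Weil-restriction parameter space of $\Gal(\mathbf{Q}(\zeta_l)/E_{\mu,l})$-orbits and the Galois-stable assignment $f(\gamma_{\tau x})=\varepsilon(\tau)^{-1}$, so that the curve and deck transformation descend. You take the geometric monodromy of the $\mathfrak{m}$-part of $H^1$ from the earlier theorem, place the arithmetic image inside $\PGL$/$\PGU$ because it commutes with $T$ and scales the Weil pairing, and finish with Hilbert irreducibility on the rational base. Two points the paper leaves tacit, you make explicit: an orbit of size $\mu>1$ has total monodromy $\sum_{a\in H}a\equiv 0 \pmod l$, and $T$-equivariance rules out field automorphisms. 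Two of your supporting claims are wrong, however.

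First, $-1\in\langle p\rangle\subset(\mathbf{Z}/l)^\times$ holds exactly when $e=\mathrm{ord}(p \bmod l)$ is even, not exactly when $(l-1)/e$ is odd. The two conditions differ whenever $0<v_2(e)<v_2(l-1)$, e.g.\ $l=5$, $p\equiv -1 \pmod 5$. The direction you use for the unitary bullet is fine. But your own mechanism then cuts the other way. Whenever $e$ is even, $\bar{\mathfrak{m}}=\mathfrak{m}$ and the cup product is nondegenerate on the $\mathfrak{m}$-part. There $T$ acts by a scalar $\zeta$ with $\zeta^{-1}=\zeta^{p^{e/2}}$, so the cup product induces a nondegenerate skew-Hermitian form relative to $x\mapsto x^{p^{e/2}}$ on $\mathbf{F}_{p^e}$. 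The monodromy therefore lies in $\GU(n,p^{e/2})$, which for $n\ge 2$ is too small to contain $\mathrm{SL}(n,p^e)$. Hence the second bullet cannot come out of this construction when $e$ is even but $(l-1)/e$ is even. Your argument, like the paper's, simply imports the parity criterion from the earlier theorem, and that criterion should be phrased via the parity of $e$.

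Second, ``enlarge $s$ within the congruence class'' is not available. The statement fixes $n=\mu s-2$, so changing $s$ changes the group. When $n+2<l$ (e.g.\ $\mu=l-1$, $s=1$), the quoted monodromy theorem, which needs at least $l$ branch points, does not apply. The paper does not address this either, so in that range both arguments need a monodromy result beyond the one quoted.
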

	\begin{proof}
	Write $n+2 = d \mu$, where $d$ is an integer. Fix a generator $\sigma_0$ of the Galois group $\Gal(\mathbf{Q}(\zeta_l)/E_{\mu, l})$. Let $\mathcal{C}_{\mu,d}$ be the space representing the functor $\mathcal{C}_{\mu,d}(R) = \{ (x_{11}, \ldots, x_{1\mu}, x_{21}, \ldots, x_{2\mu}, \ldots, x_{d\mu}) \in (R \otimes_{\mathbf{Q}} \mathbf{Q}(\zeta_l)), \forall i = 1, \ldots, d, j = 1, \ldots, \mu, \sigma_0(x_{ij}) = x_{i, j+1 \mod \mu}\}$. That means that for every rational point of $\mathcal{C}_{\mu, d}$, the $\mu$-tuple $(x_{i1}, \ldots, x_{i\mu})$ forms a Galois orbit of $\Gal(\mathbf{Q}(\zeta_l)/E_{\mu,l})$. Scheme-theoretically, this is an open subset of the Weil restriction $\mathrm{Res}^{\mathbf{Q}(\zeta_l)}_{E_{\mu,l}} \mathbf{A}^d$, which is a form of the usual parameter space of $d \mu$-tuples of distinct points in $\mathbf{A}^{d \mu}$. What's particularly important for us is that this moduli space is rational. \par 
	To any point of the moduli space $\mathcal{C}_{\mu, d}$, we can uniquely associate a cyclic covering of $\mathbf{P}^1$, ramified at exactly all the points $x_{ij}$, such that both the covering curve and the deck transformation are defined over $\mathbf{Q}$. 
	Let $\mathcal{C}_{\mu, d}(p)$ be the moduli space of these tuples together with a level $p$-structure on the corresponding cyclic covering of $\mathbf{P}^1$.  Let us consider the Galois group $\Gal(E_{\mu,l}(\mathcal{C}_{\mu,d}(p))/E_{\mu,l}(\mathcal{C}_{\mu,d}))$ of the extension generated by the coordiantes of the $p$-torsion points on the Jacobian of the generic cyclic covering. \par 
	Recall from \cite{nesterov} that if $\frac{l-1}{\mathrm{ord (p \mod l)}}$ is odd, then there is a natural nondegenerate sesquilinear form on the $\mathbf{F}_q$-vector space $H^1(X; \mathbf{Z})/\mathfrak{m}$. Futhermore, by the main theorem of the same paper, the topological monodromy group in this case contains $\PSU(n, q)$, and in the other case contains $\PSL(n, q)$. As the parameter space $\mathcal{C}_{\mu,d}$  is rational, we can apply Hilbert irreducibility theorem to conclude that for infinitely many parameter values, the Galois group of the specialization contains either $\PSU(n, q)$ or $\PSL(n, q)$. \par 
	On the other hand, because the Weil pairing is valued in roots of unity and is Galois-equivariant, we can at least guarantee that the Galois action always preserve the Weil pairing up to scaling. It follows that the Galois-action preserves the sesquilinear form obtained from the Weil pairing up to scaling. Therefore, the Galois group of any specialization is always contained in $\PGU(n, q)$ in the unitary case.
	\end{proof}
	 \begin{remark}
	 	Let us make explicit the special cases of the result for small $l$.
	 	\begin{itemize}
	 		\item For $l=3$: If $p=3k+2$ is a prime number, one can realise a group between $\PGU(n,p)$ and $\PSU(n,p)$ as a Galois group over $\mathbf{Q}$ for even $n$, and over $\mathbf{Q}(\omega)$ for odd $n$;  If $p=3k+1$ is a prime number, then one can realise a group between $\PGL(n,p)$ and $\PSL(n,p)$ as a Galois group over $\mathbf{Q}$ for even $n$ and over $\mathbf{Q}(\omega)$ for odd $n$.
	 		\item For $l=5$: If $p=5k \pm 2$ is a prime number, one can realise a group between $\PGU(n,p^2)$ and $\PSU(n,p^2)$ as a Galois group over $\mathbf{Q}$ for $n \equiv 2 \mod 4$, over $\mathbf{Q}(\sqrt{5})$ for $n$ divisible by $4$, and over $\mathbf{Q}(\zeta_5)$ for odd $n$; If $p=5k+1$ is a prime number, then one can realise a group between $\PGL(n,p)$ and $\PSL(n,p)$ as a Galois group over $\mathbf{Q}$ for $n \equiv 2 \mod 4$, over $\mathbf{Q}(\sqrt{5})$ for $n$ divisible by $4$, and over $\mathbf{Q}(\zeta_5)$ for $n$ odd. If $p=5k-1$ is a prime number, then one can realise a group between $\PGL(n,p^2)$ and $\PSL(n,p^2)$ over the same field.
	 		\item For $l=7$: If $p=7k-1$ is a prime number one can realise a group between $\PGU(n,p)$ and $\PSU(n,p)$ as a Galois group over $\mathbf{Q}$ if $n \equiv 4 \mod 6$, over $\mathbf{Q}(\sqrt{-7})$, if $n \equiv 1 \mod 6$, over $\mathbf{Q}(\cos \frac{2 \pi}{7})$, if $n \equiv 3,5 \mod 6$, and over $\mathbf{Q}(\zeta_7)$ otherwise. If $p=7k+3,7k+5$ is a prime number, then one can realise a group between $\PGU(n,p^3)$ and $\PSU(n,p^3)$ over the same field. If $p=7k+1$ is a prime number, then one can realise a group between $\PGL(n,p)$ and $\PSL(n,p)$ as a Galois group over $\mathbf{Q}$ if $n \equiv 4 \mod 6$, over $\mathbf{Q}(\sqrt{-7})$, if $n \equiv 1 \mod 6$, over $\mathbf{Q}(\cos \frac{2 \pi}{7})$, if $n \equiv 3,5 \mod 6$, and over $\mathbf{Q}(\zeta_7)$ otherwise.
	 	\end{itemize}
	 \end{remark}
 	
 	\section{The Multiplier Character and the Determinant}
 	
 	Our next goal is to determine which subgroup between $\PSU(n,q)$ and $\PGU(n,q)$ we actually realize in theorem \ref{main}. The difference between the two groups is completely exhausted by the determinant $\det \rho$ and the multiplier character $\nu$, so this is what we have to compute. Let us specialize for now to the case $\mu = l-1$, $E_{\mu, l} = \mathbf{Q}$.
 	
 	\begin{proposition}
 		For the Galois representations $G_{\mathbf{Q}} \to GU(n,q)$ constructed in the previous theorem, we have $\nu(\rho) = \varepsilon$ and $\det \rho = \varepsilon^{\frac{n}{2}} \psi$ for a character $\psi$ of order dividing $2l$.
 	\end{proposition}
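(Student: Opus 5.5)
We first compute the multiplier $\nu(\rho)$ from the Weil pairing, and then determine $\det\rho$ from the $\lambda$-adic lift of $\rho$ together with Hodge--Tate weights.

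\textbf{The multiplier.} Let $X/\mathbf{Q}$ be a specialization of the cyclic covering from the proof of Theorem \ref{main}, with deck transformation $T$ defined over $\mathbf{Q}$. The Weil pairing $e: H^1_{\et}(X_{\Qbar};\mathbf{Z}_p)\times H^1_{\et}(X_{\Qbar};\mathbf{Z}_p)\to \mathbf{Z}_p(-1)$ (or $\mathbf{Z}_p(1)$, depending on normalization) is $G_{\mathbf{Q}}$-equivariant. It is also $T$-invariant, since $T$ is an automorphism of $X$. Following \cite{nesterov}, we form the sesquilinear form on $H^1/\mathfrak{m}$ by combining $e$ with the trace $\mathbf{F}_q\to\mathbf{F}_p$. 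Concretely, $h(x,y)$ is the unique $\mathbf{F}_q$-valued form with $\mathrm{Tr}(\alpha h(x,y)) = e(\alpha x, y)$, and the adjoint of $T$ with respect to $e$ is $T^{-1}$; this is what makes $h$ Hermitian for the Frobenius twist. Because $T$ commutes with $G_{\mathbf{Q}}$, the Galois action is $\mathbf{F}_q$-linear, and equivariance of $e$ gives $h(\sigma x,\sigma y) = \varepsilon(\sigma)^{\pm1} h(x,y)$. After fixing the normalization of the cyclotomic twist this gives $\nu(\rho)=\varepsilon$. This step is routine.

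\textbf{The determinant.} The $\mathbf{F}_q$-linear representation $\rho$ lifts to the $\lambda$-adic representation $\rho_\lambda$ on $H^1\otimes_{\mathbf{Z}[\zeta_l]}\mathcal{O}_\lambda$. For the moment we work over a field containing $\zeta_l$, or over $\mathbf{Q}$ in the unitary case, where the eigenspace is stable. Its determinant $\chi=\det\rho_\lambda$ is a one-dimensional $p$-adic Galois character. It is de Rham because it is a piece of the cohomology of a curve, and it is unramified outside $p$ and the bad primes. By class field theory, such a character equals $\varepsilon^{k}\psi$ with $\psi$ of finite order and $k$ its Hodge--Tate weight.

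To compute $k$, we use $\chi\cdot\chi^{c}$, where $c$ is the conjugation $\zeta_l\mapsto\zeta_l^{-1}$. The pairing identifies $\rho_\lambda^{c}$ with $\rho_\lambda^{\vee}(-1)$, so $\chi\cdot\chi^{c}=\varepsilon^{n}$ up to finite order. In the unitary case $\det$ lands in the norm-one torus twisted by $\nu^{n/2}$, which also gives weight $n/2$ directly. The same conclusion follows from the Hodge decomposition: the eigenspace splits as $H^{1,0}_\chi\oplus H^{0,1}_\chi$ with dimensions $a$ and $b$, $a+b=n$. Then $k=a$, and since the $\chi$ and $\bar\chi$ eigenspaces swap $a$ and $b$, the Galois-stable combination forces $k=n/2$. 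Reducing mod $\lambda$ gives $\det\rho=\varepsilon^{n/2}\psi$.

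\textbf{Bounding the order of $\psi$.} This is the main obstacle. The plan is to restrict to the geometric fundamental group. There the determinant of the monodromy lies in $\mu_l$, by the monodromy theorem from \cite{nesterov} (images in $\SlU$ or $\SlL$). Since $\varepsilon^{n/2}$ is trivial geometrically, $\psi$ restricted to the geometric part has order dividing $l$. The remaining arithmetic part of $\psi$ is constant in the family. Because $\det\rho$ lies in $\mathbf{F}_q^\times$ with norm condition, that part should satisfy $\psi^2 = \nu^{n}\det\rho^{-2}\cdot(\text{geom})$, which gives order dividing $2l$. Concretely, we compare $\det\rho\cdot\overline{\det\rho}=\nu^n=\varepsilon^n$, which forces $\psi\bar\psi$ to be trivial. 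Then $\psi$ takes values in the norm-one subgroup, and it is unramified away from $l$, coming from the $\zeta_l$-descent, and $p$. I would first try to show that $\psi$ factors through $\Gal(\mathbf{Q}(\zeta_l)/\mathbf{Q})$ times a sign, by checking that $\rho$ is crystalline at $p$ with the expected weights, so that $\psi$ is unramified at $p$. Then $\psi$ is a Dirichlet character of conductor dividing $l$ whose values satisfy $\psi\bar\psi=1$. Combining this with the geometric $\mu_l$ constraint gives order dividing $2l$.
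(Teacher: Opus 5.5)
Your first two steps follow the paper's argument. You get $\nu(\rho)=\varepsilon$ from Galois-equivariance of the Weil pairing pushed through the trace form, and $k=n/2$ from the norm relation $\chi\chi^{c}=\nu^{n}$ once the de Rham character is written as $\varepsilon^{k}\psi$.

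The gap is in your third step, the bound on the order of $\psi$. Three things go wrong.
\begin{itemize}
\item Your claim that $\psi$ is unramified outside $\{l,p\}$ is false for a general specialization. $X$ has bad reduction at every prime $\ell'$ where two branch points collide. Your own geometric input applies here: two colliding points whose exponents do not sum to $0 \bmod l$ give a vanishing piece of rank one over $\mathbf{Z}[\zeta_l]$, on which the local monodromy is a primitive $l$-th root of unity. Arithmetically, for $\ell'\equiv 1 \bmod l$ this means inertia at $\ell'$ can act on $\det\rho_\lambda$ through a character of order $l$.
\item Crystallinity at $p$ requires good reduction at $p$, which a Hilbert-irreducibility specialization need not have.
\item Even granting conductor dividing $l$, nothing produces $2l$. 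A character mod $l$ has order dividing $l-1$. The condition $\psi\bar\psi=1$ only puts its values in $\mu_{q+1}$, and $\gcd(l-1,q+1)$ is in general larger than $2$. The geometric $\mu_l$ statement concerns $\pi_1$ of the geometric fibre of the family, not a single specialized $G_{\mathbf{Q}}$-character. If $\psi$ had conductor dividing $l$ for every specialization, its $l$-part would always vanish, which is incompatible with the geometric determinant being a nontrivial element of $\mu_l$.
\end{itemize}

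The missing idea is that $\psi$ takes values in $\mathbf{Q}(\zeta_l)$, not merely in $\mathbf{Q}_p(\zeta_l)$, which contains all of $\mu_{p^e-1}$.
\begin{itemize}
\item $\mathbf{Z}[\zeta_l]$ acts on $\mathrm{Jac}(X)$ through $T$, which is defined over $\mathbf{Q}$. Hence, for primes $\ell'\neq p$ of good reduction, the $\mathbf{Q}(\zeta_l)\otimes\mathbf{Q}_p$-linear characteristic polynomial of $\mathrm{Frob}_{\ell'}$ on $V_p(\mathrm{Jac}(X))$ has coefficients in $\mathbf{Q}(\zeta_l)$ (the standard rationality result of Shimura and Ribet).
\item So $\psi(\mathrm{Frob}_{\ell'})=(\ell')^{-n/2}\det\rho_\lambda(\mathrm{Frob}_{\ell'})$, up to the Frobenius convention, is a root of unity in $\mathbf{Q}(\zeta_l)$. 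It therefore lies in $\mu_{2l}$.
\item Chebotarev then gives $\psi\colon G_{\mathbf{Q}}\to\mu_{2l}$, with no ramification analysis at all.
\end{itemize}
The paper does this implicitly. It writes the Hodge--Tate determinant as $\varepsilon^{k}\psi$ with $\psi$ already $\mu_{2l}$-valued, reading the bound off the cyclotomic coefficients, and only has to determine $k$.

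A minor correction: the $\lambda$-pieces are $G_{\mathbf{Q}}$-stable in the linear case too, because $T$ is defined over $\mathbf{Q}$.
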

 	\begin{remark}
 		Under the conditions of Theorem 3, $n$ is automatically even.
 	\end{remark}
 	\begin{proof}
 		In this proof, we need to consider the $\mathbf{F}_q$-representation constructed above as a mod $p$ reduction of a $\mathbf{Q}_p(\zeta_l)$-representation. \par
 		We start by determining the Galois action on the sesquilinear form $h$ defining the unitary group. It is related to the cup product in cohomology via $\mathrm{Tr}_{\mathbf{Q}_p(\zeta_l) / \mathbf{Q}_p} h(x,y) = x \cup y$. If we act on both $x$ and $y$ by an element $g \in G_{\mathbf{Q}}$, we get $\mathrm{Tr}_{\mathbf{Q}_p(\zeta_l) / \mathbf{Q}_p} h(gx,gy) = gx \cup gy = \varepsilon(g) x \cup y = \mathrm{Tr}_{\mathbf{Q}_p(\zeta_l) / \mathbf{Q}_p} \varepsilon(g) h(x,y)$. From this we conclude that the multiplier character $\nu(\rho) = \varepsilon$. \par 
 		Recall that for a unitary map, its determinant and multiplier are related by $N_{\mathbf{Q}_p(\zeta_l)/\mathbf{Q}_p(\zeta_l)^+} \det = \nu^n$, where $n$ is the dimension of the vector space in question. We also know that the determinant of our Galois representation is a Hodge-Tate character $G_{\mathbf{Q}} \to \mathbf{Q}_p(\zeta_l)^{\times}$, therefore it must have the form $\varepsilon^k \psi$, where $k$ is an integer and $\psi : G_{\mathbf{Q}} \to \mu_{2l}$ is a finite order character. Substituting $\nu = \varepsilon$, we obtain $N_{\mathbf{Q}_p(\zeta_l)/\mathbf{Q}_p(\zeta_l)^+} \det = \varepsilon^{2k}N_{\mathbf{Q}_p(\zeta_l)/\mathbf{Q}_p(\zeta_l)^+} (\psi) = \varepsilon^n$, hence $k = \frac{n}{2}$. 
 	\end{proof}
 	
 	Similarly, the difference between $\PSL(n, q)$ and $\PGL(n, q)$ is exhausted by the determinant only. 
 	
 	\begin{proposition}
 			For the Galois representations $G_{\mathbf{Q}} \to \GL(n,q)$ constructed in the previous theorem, we have $\det \rho = \varepsilon^{n} \psi$ for a character $\psi$ of order dividing $2l$.
 	\end{proposition}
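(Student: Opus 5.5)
The plan is to follow the proof of the previous proposition, replacing the sesquilinear form by the duality between two distinct primes. In the case $\frac{l-1}{\mathrm{ord}(p \bmod l)}$ even, $-1$ is not a power of $p$ modulo $l$. So complex conjugation on $\mathbf{Z}[\zeta_l]$ sends the chosen prime $\mathfrak{m}$ over $p$ to a different prime $\overline{\mathfrak{m}}$. As before, I will lift the $\mathbf{F}_q$-representation to characteristic zero. By the Galois descent of Section 3, the $G_{\mathbf{Q}}$-action on $H^1_{\et}(X_{\Qbar}, \mathbf{Z}_p)$ commutes with the deck transformation $T$. Hence it respects the decomposition
\[
H^1_{\et}(X_{\Qbar},\mathbf{Z}_p)=\bigoplus_{\mathfrak{p}\mid p} H^1_{\et}(X_{\Qbar},\mathbf{Z}_p)\otimes_{\mathbf{Z}_p[\zeta_l]}\mathbf{Z}[\zeta_l]_{\mathfrak{p}} ,
\]
and each summand $V_{\mathfrak{p}}$ is a free $\mathbf{Z}[\zeta_l]_{\mathfrak{p}}$-module of rank $n$ carrying a $G_{\mathbf{Q}}$-action. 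The representation $\rho$ is the reduction of $V_{\mathfrak{m}}$.

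First, I will handle the determinant via Hodge--Tate theory. The character $\det V_{\mathfrak{m}}$ takes values in $\mathbf{Q}_p(\zeta_l)^{\times}$, which is unramified over $\mathbf{Q}_p$. It is Hodge--Tate, since it is a direct summand of a de Rham representation, and its character is abelian over $\mathbf{Q}$. So, exactly as in the previous proposition, it has the form $\varepsilon^{k}\psi$ with $\psi$ of finite order. The finite order part is controlled by the ramification: away from $p$, $\psi$ is ramified only at $l$ and at the bad primes of the chosen specialization. The geometric part of $\psi$ has order $l$ because the monodromy determinant lies in $\mu_l$ (the $\SlL$ theorem). The residual ambiguity is a sign. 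I expect $\psi$ to take values in $\mu_{2l}$ for the same reason as in the unitary case, and I will argue this the same way.

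Second, I will compute $k$. The cup product together with the $T$-equivariance $Tx\cup Ty=x\cup y$ pairs $V_{\mathfrak{m}}$ perfectly with $V_{\overline{\mathfrak{m}}}$ into $\mathbf{Z}_p(-1)$, or $(1)$ depending on conventions. This gives
\[
\det V_{\mathfrak{m}}\cdot\det V_{\overline{\mathfrak{m}}}=\varepsilon^{n}\cdot(\text{finite order}),
\]
so $k_{\mathfrak{m}}+k_{\overline{\mathfrak{m}}}=n$. To pin down $k_{\mathfrak{m}}$ itself, I will read off the Hodge--Tate weights of $V_{\mathfrak{m}}\otimes\mathbf{C}_p$. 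They are given by the Chevalley--Weil / eigenspace Hodge numbers: for a character $\chi=\zeta_l\mapsto\zeta_l^{a}$, $\dim H^{1,0}_\chi=-1+\sum_j\langle a k_j/l\rangle$, where $k_j$ is the monodromy exponent at the $j$th branch point. Then $k_{\mathfrak{m}}$ is the sum of these over the embeddings $a$ lying in the $\langle p\rangle$-coset attached to $\mathfrak{m}$, normalized by the residue degree. Our branch data is special: the $k_j$ run through full $(\mathbf{Z}/l)^{\times}$-orbits. So $\sum_j\langle ak_j/l\rangle$ is independent of $a$, and each eigenspace has the balanced Hodge numbers $(n/2,n/2)$. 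Hence $k_{\mathfrak{m}}$ is determined directly, and $k_{\mathfrak{m}}=k_{\overline{\mathfrak{m}}}$. Finally, the identification of $\varepsilon$ with its mod $p$ reduction, taken through the $\mathbf{F}_q$-structure rather than the $\mathbf{F}_p$-structure, gives the stated exponent $n$.

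The main obstacle is this last normalization. The value of $k$ depends on how the Hodge--Tate weights of the $e=\mathrm{ord}(p \bmod l)$ embeddings $\mathbf{Q}_p(\zeta_l)\hookrightarrow\overline{\mathbf{Q}}_p$ combine into one exponent of $\varepsilon$ for an $\mathbf{F}_q$-valued determinant. The sum-over-coset calculation must be done carefully to confirm the exponent $n$ rather than $n/2$ or $ne/2$. I would first verify it in the smallest case, $l=3$ and $p\equiv1\pmod3$, where $e=1$ and the answer can be checked by hand.
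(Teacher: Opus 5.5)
Your setup (the $\mathfrak p$-decomposition, the cup-product pairing of $V_{\mathfrak m}$ with $V_{\overline{\mathfrak m}}$, the balanced Hodge numbers) is sound. The trouble is that it proves a different exponent, and your last step, which is meant to convert it to $n$, would fail.

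\textbf{The exponent is $n/2$, not $n$.} Your own computation gives $k_{\mathfrak m}=n/2$.
\begin{itemize}
\item Each $\tau$-eigenspace has Hodge--Tate weights $0$ and $1$ (up to sign convention), each with multiplicity $n/2$. So $\det V_{\mathfrak m}$ has weight $n/2$ at every embedding, consistent with $k_{\mathfrak m}=k_{\overline{\mathfrak m}}$ and $k_{\mathfrak m}+k_{\overline{\mathfrak m}}=n$.
\item Summing over the $e$ embeddings computes the weight $ne/2$ of $N_{K_{\mathfrak m}/\mathbf{Q}_p}\det$, where $K_{\mathfrak m}$ is the completion of $\mathbf{Q}(\zeta_l)$ at $\mathfrak m$. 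Dividing by $e$ returns $n/2$.
\item No choice of $\mathbf{F}_p$- versus $\mathbf{F}_q$-structure changes this: $\varepsilon^{n/2}\psi$ reduces to $\bar\varepsilon^{\,n/2}\bar\psi$. The discrepancy $\varepsilon^{n/2}$ has infinite order $p$-adically, and mod $p$ its order $(p-1)/\gcd(p-1,n/2)$ need not divide $2l$. So it cannot be absorbed into $\psi$.
\end{itemize}
Your proposed test case already shows this. Take $l=3$, $p\equiv 1\pmod 3$ and four branch points. Then $X$ has genus $2$, $n=2$, $e=1$, and $V_{\mathfrak m}$ is a $2$-dimensional $\mathbf{Q}_p$-representation with Hodge--Tate weights $\{0,1\}$, so $\det=\varepsilon^{\pm1}\psi$.

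The paper's route through Frobenius weights gives the same answer. At a good prime $\ell\nmid pl$ the eigenvalues of $\mathrm{Frob}_\ell$ on $V_{\mathfrak m}$ have absolute value $\ell^{1/2}$, so $|\det(\mathrm{Frob}_\ell\mid V_{\mathfrak m})|=\ell^{n/2}$, whereas $|\varepsilon^{n}\psi(\mathrm{Frob}_\ell)|=\ell^{n}$. The paper passes from ``Weil number of weight $n$'' to $\varepsilon^{n}$, overlooking that $\varepsilon$ itself has weight $2$.

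So the provable statement is $\det\rho=\varepsilon^{n/2}\psi$, as in the unitary case. This is also the form the paper writes in Proposition~\ref{odd'}. You should aim to prove that rather than force $\varepsilon^{n}$. The difference matters downstream. Reducing ``image exactly $\PSL(n,q)$'' to Proposition~\ref{odd'} uses that the cyclotomic factor is an $n$-th power in $\mathbf{F}_q^{\times}$. For $\bar\varepsilon^{\,n/2}$ this holds when $e$ is even, but fails, e.g., when $e$ is odd and $n\equiv 2\pmod 4$.

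\textbf{The bound $\psi(G_{\mathbf{Q}})\subseteq\mu_{2l}$ still needs an argument.} A priori $\psi$ takes values in the roots of unity of $K_{\mathfrak m}$, i.e.\ in $\mu_{p^e-1}$. The fact that the geometric monodromy has determinant in $\mu_l$ constrains only the restriction to the geometric fundamental group of the family. It says nothing about the arithmetic part at a specialization, and ``the residual ambiguity is a sign'' is exactly what has to be shown. (The paper's proof of this statement also yields only a finite-order $\psi$.) One clean fix:
\begin{itemize}
\item For good $\ell\nmid pl$, $\mathrm{Frob}_\ell$ is an endomorphism of the reduced Jacobian commuting with $\mathbf{Z}[\zeta_l]$.
\item Hence its $\mathbf{Q}(\zeta_l)\otimes\mathbf{Q}_p$-linear characteristic polynomial has coefficients in $\mathbf{Q}(\zeta_l)$, independent of $p$.
\item So $\psi(\mathrm{Frob}_\ell)=\ell^{\mp n/2}\det(\mathrm{Frob}_\ell\mid V_{\mathfrak m})$ is a root of unity in $\mathbf{Q}(\zeta_l)$, hence lies in $\mu_{2l}$.
\item Chebotarev density then gives $\psi(G_{\mathbf{Q}})\subseteq\mu_{2l}$.
\end{itemize}
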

 	\begin{proof}
 		We note that the eigenvalues of the Frobenius at $p$ acting on $H^1_{et}(X; \mathbf{Q}_p)$ are $p$-Weil numbers of weight $1$. When we compare the eigenvalues of a linear operator on an $\mathbf{Q}_p(\zeta_l)$-vertor space and its underlying $\mathbf{Q}_p$-vector space, the former eigenvalues are included among the latter eigenvalues. Hence, the the eigenvalues of the Frobenius at $p$ on the Galois representation from theorem \ref{main} are also $p$-Weil numbers of weight $1$. The determinant of this Frobenius action is therefore a $p$-Weil number of weight $n$. Hence, $\det \rho = \varepsilon^{n} \psi$ for a finite order character $\psi$.
 	\end{proof}
 	\begin{theorem} \label{unitary}
 		For the projective Galois representations $G_{\mathbf{Q}} \to \PGU(n,q)$ constructed in the previous theorem, if  $l {\not |}\ n$ and if either $q$ is square or $\gcd(n,q+1)=\gcd(\frac{n}{2},q+1)$, then the image is either $\PSU(n,q)$ or $\PSU(n,q)$ has index two in the image.
 	\end{theorem}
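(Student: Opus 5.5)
We identify the image inside $\PGU(n,q)$ by measuring the obstruction to lying in $\PSU(n,q)$. Since the image contains $\PSU(n,q)$ by Theorem \ref{main}, it suffices to bound the image of the Galois group in the quotient $\PGU(n,q)/\PSU(n,q)$ and show it has order at most $2$. Recall that $\PGU(n,q)/\PSU(n,q)\cong \mathbf{Z}/\gcd(n,q+1)$. One concrete description is as follows. For $A\in\GU(n,q)$ with $A\,\mathrm{Fr}_q(A)^t=\nu(A)$, rescaling by $\lambda\in\mathbf{F}_{q^2}^\times$ multiplies $\nu$ by $N(\lambda)=\lambda^{q+1}$ and $\det$ by $\lambda^n$. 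The class of $A$ in the quotient is the class of $\det A$ modulo $n$-th powers in $\mathbf{F}_{q^2}^\times$, after normalising so that $\nu(A)=1$; equivalently, $A\in \mathbf{F}_{q^2}^\times\cdot\SlU$ with trivial $\det$ class.

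The first step is to compute this invariant for $\rho(g)$ using the preceding proposition: $\nu(\rho)=\varepsilon$ and $\det\rho=\varepsilon^{n/2}\psi$ with $\psi$ of order dividing $2l$. Choose $\lambda$ with $\lambda^{q+1}=\varepsilon(g)^{-1}$, which is possible since the norm map $\mathbf{F}_{q^2}^\times\to\mathbf{F}_q^\times$ is surjective. Then $\lambda\rho(g)$ has multiplier $1$ and determinant $\lambda^n\varepsilon(g)^{n/2}\psi(g)$. The term $\lambda^n\varepsilon(g)^{n/2}$ is the key quantity: its $(q+1)$-st power is $1$, so it lies in $\mu_{q+1}$, which is the relevant target since normalized unitary determinants lie in $\mu_{q+1}$ and $\PSU$ corresponds to $\mu_{q+1}^{n}\cdot$ (well-defined modulo the ambiguity $\lambda\mapsto\lambda\zeta$, $\zeta\in\mu_{q+1}$). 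We must show that $\lambda^n\varepsilon(g)^{n/2}$ is an $n$-th power in $\mu_{q+1}$. Write $\varepsilon(g)=\beta^{q+1}$ with $\beta=\lambda^{-1}\xi$ for some $\xi$. Then $\lambda^n\varepsilon(g)^{n/2}=\lambda^n\beta^{(q+1)n/2}$. Here the hypotheses enter. If $q$ is a square, then $\mathbf{F}_q^\times$ elements are all norms of squares appropriately, and $\varepsilon(g)$ itself can be taken as $\beta^{q+1}$ with $\beta^{(q+1)/2}\in$ suitable position; more cleanly, we reduce to showing that the map $\mu_{q+1}\ni x\mapsto$ class modulo $\mu_{q+1}^n$ kills $\mu_{q+1}^{n/2}$. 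This is exactly the condition $\gcd(n,q+1)=\gcd(n/2,q+1)$, since $\mu_{q+1}^{n}=\mu_{q+1}^{\gcd(n,q+1)}$. In the square case I would verify directly that $\varepsilon(g)^{1/2}$ can be chosen in $\mathbf{F}_q$ compatibly, so that the factor becomes $(\lambda\varepsilon^{1/2})^n$ with $\lambda\varepsilon^{1/2}\in\mu_{q+1}$ up to a sign. This bookkeeping with square roots and $\pm1$ is the step I expect to be most delicate, and it is where a residual factor of $2$ may appear.

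The second step handles $\psi$. Its order divides $2l$, and its image in $\mu_{q+1}/\mu_{q+1}^{\gcd(n,q+1)}$ has order dividing $\gcd(2l,\gcd(n,q+1))$. Since $l\nmid n$, the $l$-part vanishes, leaving at most order $2$. The $\pm1$ ambiguity from the first step is also $2$-torsion, and together these give a cyclic group of order at most $2$, because the quotient is cyclic. Hence the image $H$ satisfies $\PSU(n,q)\subseteq H$ with $[H:\PSU(n,q)]\le 2$, which is the claim. The main obstacle, as noted, is making the square-root choice for $\varepsilon$ rigorous under each hypothesis. In particular, in the case where $q$ is a square, I would check that $\varepsilon(g)\in\mathbf{F}_p^\times\subset(\mathbf{F}_q^\times)^{2}$ so that the square root lies in $\mathbf{F}_q$.
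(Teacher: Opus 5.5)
Your proposal is correct and is essentially the paper's argument. You normalise the multiplier by a scalar $\lambda$ with $\lambda^{q+1}=\varepsilon(g)^{-1}$, so the determinant becomes $(\lambda^{1-q})^{n/2}\psi(g)$ with $\lambda^{1-q}\in\mu_{q+1}$. You kill the $\frac{n}{2}$-power part either by taking $\lambda^{-1}\in\mathbf{F}_q$ to be a square root of $\varepsilon(g)\in\mathbf{F}_p^\times\subset(\mathbf{F}_q^\times)^2$ when $q$ is a square, or via $\mu_{q+1}^{n/2}=\mu_{q+1}^{n}$ under the gcd hypothesis. Finally, the $\mu_l$-part of $\psi$ is trivial in the cyclic group $\mu_{q+1}/\mu_{q+1}^{n}$, whose order is prime to $l$, so only a sign can survive.

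The steps you flagged as delicate close immediately: with that choice of $\lambda$ the $\frac{n}{2}$-power factor is exactly $1$, and no sign ambiguity arises. Also, your opening description of the quotient as $\det$ modulo $n$-th powers of $\mathbf{F}_{q^2}^\times$ is wrong and should be $\mu_{q+1}$ modulo $\mu_{q+1}^{n}$, which is what you in fact use later.
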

 	\begin{proof}
	Recall that there is an isomorphism between $\PGU(n,q)$ and $\PU(n,q)$ constructed as follows. Let $A \in \GU(n,q)$. Because the norm map $\mathbf{F}_{q^2}^{\times} \to \mathbf{F}_{q}^{\times}$ is surjective, we may represent $\nu = \lambda \lambda^{q}$. Then replacing $A$ by $\lambda^{-1}A$, we see that every element in the quotient modulo center is represented by the matrix with $\nu=1$. Moreover, after applying this operation, we get $\det \lambda^{-1}A = \frac{\nu^{\frac{n}{2}}\xi}{\lambda^n} = \Big(\frac{\lambda^{q}}{\lambda} \Big)^{\frac{n}{2}}\xi$ -- a perfect $\frac{n}{2}$-power times an element of $\mu_{2l}$. We can write $\xi = (-1)^s \xi'$, where $\xi' \in \mu_l$, $s = 0, 1$. \par 
	Recall further that the quotient group $\PU(n,q)/\PSU(n,q)$ has order $\mathrm{gcd}(n,q+1)$, and that a unitary matrix $A$ is represented in this quotient by the image of $\det A$ in $\mu_{q+1}/\mu_{q+1}^n = \mu_{q+1}/\mu_{q+1}^{\gcd(n,q+1)}$. If $\xi' \in \mu_l$, and $l {\not |}\ n$, then $\xi'$ always represents a trivial element in this quotient. \par 
	For the Galois representation $\rho$ in question, which have $\nu = \varepsilon$, we see that for any $g \in G_{\mathbf{Q}}$, $\nu(\rho(g)) \in \mathbf{F}_p^{\times}$. If $q$ is a square, then in the contsruction of the isomorphism $\PGU \to \PU$, we can pick $\lambda \in \mathbf{F}_{p^2}$ such that $\lambda^2 = \nu(\rho(g))$. Then $\lambda^q = \lambda$, and $\lambda \lambda^q = \nu(\rho(g))$. In this case, in the expression $\det \lambda^{-1}A = \Big(\frac{\lambda^{q}}{\lambda} \Big)^{\frac{n}{2}}\xi = \xi$, the $\frac{n}{2}$-power part cancels out. Alternatively, if $\gcd(n,q+1)=\gcd(\frac{n}{2},q+1)$, then every $\frac{n}{2}$-power in $\mu_{q+1}$ is automatically an $n$-th power. \par 
	\end{proof}
	Note that because $n$ and $q+1$ are both even, the element $-1$ will always represent a nontrivial element in $\mu_{q+1}/\mu_{q+1}^n$. If we want to arrange that our Galois representation has image exactly $\PSU(n,q)$, it suffices to prove the following:
	\begin{proposition} \label{odd}
		For the Galois representations $G_{\mathbf{Q}} \to \GU(n,q)$ constructed in theorem \ref{main}, when we write $\det \rho = \varepsilon^{\frac{n}{2}} \psi$, the character $\psi$ has odd order.
	\end{proposition}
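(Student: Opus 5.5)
We need to show that the quadratic part of $\psi$, namely $\psi^l$, is trivial. Since $\psi$ takes values in $\mu_{2l}$, it factors as $\psi = \psi_2\psi_l$ with $\psi_2$ quadratic and $\psi_l$ of order dividing $l$. The plan follows the Proof Outline: separate the contribution to $\psi$ coming from the geometric monodromy from the contribution coming from the difference between arithmetic and geometric monodromy. The latter is constant over the parameter space $\mathcal{C}_{l-1,d}$, so it can be detected at a single well-chosen specialization.

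\textbf{Step 1 (geometric part).} Restrict $\det\rho$ to the geometric fundamental group of $\mathcal{C}_{l-1,d}$ over $\overline{\mathbf{Q}}$. On this subgroup $\varepsilon$ is trivial, so $\det\rho=\psi$ there. By the monodromy theorem quoted from \cite{nesterov}, the geometric image lies in $\SlU(n,q)$, so the geometric determinant lies in $\mu_l$. Consequently $\psi_2$ is trivial on the geometric fundamental group. Hence $\psi_2$ factors through $G_{\mathbf{Q}}$ via the structure map, and is a single quadratic Dirichlet character $\chi$, independent of the specialization point.

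\textbf{Step 2 (pin down $k$).} The exponent $k=\frac{n}{2}$ of $\varepsilon$ is already fixed uniformly by the proposition above, since the Hodge–Tate weights do not vary in the family. Therefore the identity $\det\rho = \varepsilon^{n/2}\chi\psi_l$ holds at every rational point, with the same $\chi$ throughout.

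\textbf{Step 3 (kill $\chi$ with one example).} It suffices to exhibit one point of the family, or a degeneration of it, where $\chi$ is visibly trivial. One can also use that the determinant of the $\mathfrak{m}$-part of $H^1$ is computed compatibly in the $\mathbf{Q}_p(\zeta_l)$-adic family, so it suffices to work $\lambda$-adically for a convenient prime.

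The first approach I would try is the one in the outline. Take a cyclic degree-$l$ quotient of $X_0(l^2)$ on which $\zeta_l$ acts through the diamond operators or the Atkin–Lehner-type automorphism. Its $H^1$ then decomposes into Galois representations attached to newforms with nebentypus of $l$-power order, twisted by characters of conductor $l$. For a newform with character $\omega$, the determinant of the attached $2$-dimensional representation is $\varepsilon\,\omega$, which has no quadratic part when $\omega$ has order a power of $l$ times something odd. Taking the product over the constituents in the $\mathfrak{m}$-eigenspace gives $\det = \varepsilon^{n/2}\cdot(\text{odd-order character})$.

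The curve obtained this way lies over $X_0(l^2)/\langle\text{deck}\rangle$, which is not $\mathbf{P}^1$. So I would degenerate the base to a union of rational curves, via a stable-reduction argument in which the determinant character is locally constant under specialization of the family of Galois representations. This would transport the conclusion to a cyclic cover of $\mathbf{P}^1$ branched in full Galois orbits, that is, a point of (the closure of) $\mathcal{C}_{l-1,d}$.

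\textbf{Main obstacle.} The main obstacle is Step 3. Concretely, it requires matching the degenerated modular example with the branch-data and monodromy-vector normalization of Section 3, namely that $f(\gamma_i)=\varepsilon(\sigma_i)^{-1}$ on each orbit. Only with that matching is the $\chi$ computed in the example the same $\chi$ as in the family. A fallback is a direct local computation: a Frobenius at a prime of good reduction, evaluated via Jacobi sums (Weil's computation for Fermat-type quotients $y^l=\prod(x-x_i)^{k_i}$), expresses the Frobenius eigenvalues as Jacobi sums. The determinant is then a product of Gauss sums, and Stickelberger's theorem shows that its ratio with $p^{n/2}$ is an $l$-th root of unity, with no sign.
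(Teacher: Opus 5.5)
Your Step 1 is the paper's Proposition \ref{indep}, your Step 2 is the earlier Hodge--Tate computation, and your modular example is the paper's. That example is the degree-$l$ cover $Y\to X_0(l^2)$ lying under $X_1(l^2)$ (a cover of $X_0(l^2)$, not a quotient), whose determinant is $\varepsilon$ times a nebentypus of order dividing $l$. The gap is in Step 3. It is only announced, and as announced it does not reach the statement.

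First, the modular point is in general not a point of any $\mathcal{C}_{l-1,d}$, since $X_0(l^2)$ has positive genus $g$ once $l\geq 7$, so Step 1 says nothing about it. You must construct over $\mathbf{Q}$ a curve $C$ in the moduli space of stable genus-$g$ curves with a $\Gal(\mathbf{Q}(\zeta_l)/\mathbf{Q})$-twisted marked orbit. It should join $(X_0(l^2),\text{non-rational cusps})$ to a rational curve with $g$ nodes carrying the orbit, and you then compare both ends with the generic point of $C$. Saying ``the determinant is locally constant'' does not perform this comparison. You need the semistable-reduction sequences of Proposition \ref{degen} to see that the determinant on the smooth fiber equals $\varepsilon^{-r}$ times the determinant on $H^1$ of the cyclic cover of the normalized special fiber, i.e.\ that the toric part only shifts the power of $\varepsilon$.

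Second, and more seriously, $Y\to X_0(l^2)$ is branched along a single orbit of $l-1$ cusps. The degeneration therefore only yields an example in $\mathcal{C}_{l-1,1}$, the case $n=l-3$. It is not a point of $\mathcal{C}_{l-1,d}$ for $d>1$, and Step 1 cannot move between different $d$. The paper needs a second degeneration here: $\mathbf{P}^1$ with $d$ marked orbits specializes to a chain of $d$ rational curves, each carrying one orbit. The monodromies along an orbit sum to $\sum_{a\in(\mathbf{Z}/l)^\times}a^{-1}\equiv 0 \pmod l$, so the cover is unramified over the nodes and its components are single-orbit covers. The determinant is then $\varepsilon^{-r}$ times the product of theirs, which reduces everything to $d=1$.

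The obstacle you single out is not the real one. By the descent computation of Section 3, a cover branched along one orbit, with curve and deck group defined over $\mathbf{Q}$, has monodromy $c\,\varepsilon(\sigma_i)^{-1}$ for a constant $c\in(\mathbf{Z}/l)^\times$. Changing $c$ only changes the generator of the deck group, i.e.\ applies an automorphism of $\mathbf{Q}(\zeta_l)$ to the coefficients, and this does not affect whether $\psi$ has odd order.

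Your fallback also fails as stated. Weil's expression of Frobenius eigenvalues as Jacobi sums concerns covers $y^l=x^a(1-x)^b$ branched at three points. Here there are $n+2\geq 4$ branch points whenever $n>0$, and the Frobenius traces are then character sums that do not factor into Gauss sums, so Stickelberger's theorem gives nothing directly.
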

	\begin{theorem} \label{linear}
		For the projective Galois representations $G_{\mathbf{Q}} \to \PGL(n,q)$ constructed in the previous theorem, if  $l {\not |}\ n$, then the image is either $\PSL(n,q)$ or $\PSL(n,q)$ has index two in the image.
	\end{theorem}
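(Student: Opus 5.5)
The plan mirrors the proof of Theorem \ref{unitary}, with the multiplier argument replaced by the formula for the determinant. We first recall that $\PSL(n,q)$ is the kernel of the map $\PGL(n,q)\to \mathbf{F}_q^{\times}/(\mathbf{F}_q^{\times})^n$ induced by $\det$. This quotient is cyclic of order $\gcd(n,q-1)$, and the index of the image of $\rho$ over $\PSL(n,q)$ is the order of the image of $\det\rho(G_{\mathbf{Q}})$ in it. By the previous proposition, $\det\rho=\varepsilon^{n}\psi$ with $\psi$ of order dividing $2l$. The factor $\varepsilon^n$ takes values that are perfect $n$-th powers in $\mathbf{F}_q^{\times}$, namely $n$-th powers of elements of $\mathbf{F}_p^{\times}\subset\mathbf{F}_q^{\times}$. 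Hence it dies in the quotient, and the image of $\det\rho$ in $\mathbf{F}_q^{\times}/(\mathbf{F}_q^{\times})^n$ coincides with the image of $\psi$.

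Next we write $\psi(g)=(-1)^{s}\xi'$ with $\xi'\in\mu_l$ and $s\in\{0,1\}$, as in the unitary case; this is possible because $l$ is odd, so $\mu_{2l}=\mu_2\times\mu_l$. We claim that when $l\nmid n$, every $\xi'\in\mu_l$ is an $n$-th power in $\mathbf{F}_q^{\times}$. Indeed, $\mu_l$ is a group of odd prime order $l$ and $\gcd(n,l)=1$, so raising to the $n$-th power is an automorphism of $\mu_l$. Thus $\xi'=(\xi'^{m})^n$ with $mn\equiv 1 \bmod l$. Here we use that $\mu_l\subset\mathbf{F}_q^{\times}$, which holds since $q=p^{\mathrm{ord}(p \bmod l)}$. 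Consequently the image of $\psi$ in the quotient is contained in the image of $\{\pm1\}$, a group of order at most $2$.

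Therefore the image of $G_{\mathbf{Q}}$ in $\PGL(n,q)$ maps into a subgroup of order at most $2$ of $\PGL(n,q)/\PSL(n,q)$. We then combine this with Theorem \ref{main}, which says the image contains $\PSL(n,q)$ (via the topological monodromy and Hilbert irreducibility). It follows that the image is $\PSL(n,q)$ or contains it with index two.

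There is no serious obstacle here; the only point to handle carefully is the bookkeeping. We must make sure the $\varepsilon^n$ part really reduces to an $n$-th power in $\mathbf{F}_q$, which holds because the mod $p$ cyclotomic character lands in $\mathbf{F}_p^{\times}$. We must also make sure $\psi$ has order dividing $2l$ as stated in the previous proposition. Unlike the unitary case, no condition like $q$ square or $\gcd(n,q+1)=\gcd(\frac{n}{2},q+1)$ is needed, because the exponent of $\varepsilon$ is already $n$ rather than $\frac{n}{2}$.
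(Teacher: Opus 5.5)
This is the paper's proof, with the details spelled out more carefully: the identification of $\PGL(n,q)/\PSL(n,q)$ with the cyclic group $\mathbf{F}_q^{\times}/\mathbf{F}_q^{\times n}$, the fact that $\mu_l\subset\mathbf{F}_q^{\times}$ consists of $n$-th powers when $l\nmid n$, and the containment of $\PSL(n,q)$ coming from Theorem~\ref{main}.

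One caution applies equally to the paper. The weight argument behind the cited proposition actually gives $\det\rho=\varepsilon^{n/2}\psi$, not $\varepsilon^{n}\psi$: a product of $n$ Frobenius eigenvalues at a good prime $v\neq p$, each of absolute value $v^{1/2}$, has absolute value $v^{n/2}$, and Proposition~\ref{odd'} itself writes $\varepsilon^{n/2}$. So the cyclotomic factor need not vanish in the quotient. Your conclusion still holds, because $\varepsilon^{n/2}$, like $\pm 1$, lands in the $2$-torsion of the cyclic group $\mathbf{F}_q^{\times}/\mathbf{F}_q^{\times n}$, and that $2$-torsion has order at most two. However, your closing remark that the linear case needs no analogue of the unitary-case hypotheses is unjustified if you want the image to be exactly $\PSL(n,q)$.
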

	\begin{proof}
		Recall that the quotient group $\PGL(n,q)/\PSL(n,q)$ has order $\gcd(n,q-1)$, and that the matrix $A$ in represented in this quotient by the image of $\det A$ in $\mathbf{F}_q^{\times}/ \mathbf{F}_q^{\times n} = \mathbf{F}_q^{\times}/ \mathbf{F}_q^{\times \gcd(n,q-1)}$. \par 
		For our Galois representations, we have $\det \rho = \varepsilon^n \psi$, where $\psi$ is of order dividing $2l$. As the $n$-th powers are always trivial in the quotient, and $l {\not |}\ n$, the only part that's possibly nontrivial has order two.
	\end{proof}
	Again, if we want to arrange that the image is exacty $\PSL(n,q)$, we need to prove:
	\begin{proposition} \label{odd'}
		For the Galois representations $G_{\mathbf{Q}} \to \GL(n,q)$ constructed in theorem \ref{main}, when we write $\det \rho = \varepsilon^{\frac{n}{2}} \psi$, the character $\psi$ has odd order.
	\end{proposition}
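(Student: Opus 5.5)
The plan follows the strategy announced in the Proof Outline. The character $\psi$, written as $\det \rho \cdot \varepsilon^{-\frac{n}{2}}$, is a character of $G_{\mathbf{Q}}$ of order dividing $2l$. It therefore suffices to show that its quadratic part $\psi^l$ is trivial. Note that in the linear case the proof of Proposition 5 gives $\det\rho = \varepsilon^n \psi'$; the statement of Proposition \ref{odd'} writes $\varepsilon^{\frac{n}{2}}$ instead. I would first sort out which normalization is intended, since the two differ by $\varepsilon^{\frac{n}{2}}$, whose reduction is a power of the mod $p$ cyclotomic character. I would then work with whichever $\psi$ actually occurs.

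\textbf{Step 1 (geometric vs.\ arithmetic part).} Consider the family over $\mathcal{C}_{\mu,d}$ with $\mu = l-1$, and the lisse sheaf $\mathcal{F}$ given by $H^1$ of the fibres reduced mod $\mathfrak{m}$. Its determinant twisted by $\varepsilon^{-\frac{n}{2}}$ is a rank one lisse sheaf, i.e.\ a character $\Psi$ of $\pi_1(\mathcal{C}_{\mu,d})$. Restricted to the geometric fundamental group, $\Psi$ is the determinant of the topological monodromy. By the theorem of \cite{nesterov} quoted in Section 2, this determinant takes values in $\mu_l$, so it has odd order. Hence $\Psi^l$ is trivial on $\pi_1(\mathcal{C}_{\mu,d,\overline{\mathbf{Q}}})$. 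It therefore factors through $G_{\mathbf{Q}}$: it is a quadratic Dirichlet character $\chi$, independent of the point of $\mathcal{C}_{\mu,d}$ at which we specialize. This uses that $\mathcal{C}_{\mu,d}$ is geometrically connected, which holds because it is an open subset of a Weil restriction of affine space. Consequently $\psi^l = \chi$ for every specialization, including the Hilbert irreducibility specializations used in Theorem \ref{main}.

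\textbf{Step 2 (reduce to one example).} By Step 1, it is enough to exhibit a single rational point of $\mathcal{C}_{\mu,d}$, or a limit of such points in a suitable compactification, at which $\psi$ visibly has odd order. Because $\chi$ is constant on the connected base, a degenerate fibre also suffices, provided the sheaf extends over it with semisimplified determinant unchanged, e.g.\ via a semistable model and the monodromy-weight description of $H^1$. I would also check compatibility under changing $n$ by merging or separating Galois orbits of branch points. This reduces the problem to computing $\det$ for small $d$, and eventually for pieces living on $\mathbf{P}^1$.

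\textbf{Step 3 (the example, main obstacle).} As in the outline, I would take the cyclic degree $l$ quotient of $X_0(l^2)$ (via the action of $\begin{pmatrix}1 & 1/l\\ 0 & 1\end{pmatrix}$-type automorphisms). Its $H^1$ decomposes, up to twist, into Galois representations attached to newforms with nebentypus of $l$-power conductor. The determinant of each $2$-dimensional piece is $\varepsilon \cdot \chi_f$, where $\chi_f$ is the nebentypus. This gives $\det$ of the $\mathfrak{m}$-isotypic part as $\varepsilon^{\frac{n}{2}}$ times a product of Dirichlet characters of conductor dividing $l^2$. One then checks that the quadratic part of that product is trivial. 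This is plausible because $(\mathbf{Z}/l^2)^\times$ has odd-order $l$-part and the nebentypi should pair as $\chi,\bar\chi$ across the $\zeta_l$-eigenspaces. The hard part is the last bit: degenerating this modular curve so that its base becomes a union of rational curves, and matching the cyclic cover to a point of $\mathcal{C}_{\mu,d}$ with monodromy vector $f(\gamma_i)=\varepsilon(\sigma_i)^{-1}$. I would do this with the Deligne--Rapoport/Edixhoven model of $X_0(l^2)$ at $l$, reading off the branch data on each rational component. If the pairing argument fails, the fallback is a direct computation of $\chi$ at a single Frobenius.
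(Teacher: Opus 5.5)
Your Steps 1 and 2 match the paper. The quadratic part of $\psi$ is a geometrically trivial character on the geometrically connected parameter space, so it is the same at every rational point (Proposition \ref{indep}). Several orbits are reduced to one by degenerating $\mathbf{P}^1$ to a chain of $\mathbf{P}^1$'s carrying one orbit each. You are right to flag the exponent: each orbit carries every nonzero residue mod $l$ exactly once, so every $\zeta_l$-eigenspace has Hodge numbers $(\frac{n}{2},\frac{n}{2})$, and $\varepsilon^{\frac{n}{2}}$, as in the statement, is the consistent normalization.

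The gap is in Step 3, exactly where you place the main obstacle. The modular example lives over $X_0(l^2)$, which in general has positive genus $g$. So it is not a point of $\mathcal{C}_{\mu,d}$, and Step 1 says nothing about it until it is linked to a $\mathbf{Q}$-point of the genus-zero family by a degeneration that controls the determinant on all of $G_{\mathbf{Q}}$. The Deligne--Rapoport/Edixhoven model at $l$ cannot do this:
\begin{itemize}
\item It is a mixed-characteristic degeneration. Proposition \ref{degen} over it only compares the representations restricted to a decomposition group at $l$, after whatever ramified base change semistability needs. That can at best determine $\chi|_{G_{\mathbf{Q}_l}}$, never the global quadratic character $\chi$.
\item The components of the special fibre are curves over a finite field of characteristic $l$, not $\mathbf{Q}$-points of $\mathcal{C}_{l-1,1}$ to which Step 1 applies.
\item Since $l$ is totally ramified in $\mathbf{Q}(\zeta_l)$, the $l-1$ conjugate branch points all reduce to the same point, and a degree-$l$ cover is wild in characteristic $l$. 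There is no branch data on rational components to read off.
\end{itemize}
Your fallback of evaluating $\chi$ at a single Frobenius can only show $\chi\neq 1$, never $\chi=1$.

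The missing idea is to degenerate in equal characteristic zero. The paper works in the twisted moduli space $\overline{\mathcal{M}}$ over $\mathbf{Q}$ of stable genus-$g$ curves with $l-1$ markings forming one $\Gal(\mathbf{Q}(\zeta_l)/\mathbf{Q})$-orbit. Its Bertini-type proposition gives a smooth curve $C\subset\overline{\mathcal{M}}$ over $\mathbf{Q}$ through two points: $x=(X_0(l^2),\text{non-rational cusps})$, and a $\mathbf{Q}$-point $y$ given by a rational curve with $g$ nodes carrying all the markings. The complete local rings $\widehat{\mathcal{O}}_{C,x}$ and $\widehat{\mathcal{O}}_{C,y}$ have residue field $\mathbf{Q}$, so all of $G_{\mathbf{Q}}$ sits inside the Galois group of $\mathbf{Q}((t))$. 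Proposition \ref{degen} then shows that, for every $\sigma\in G_{\mathbf{Q}}$, the determinant at $x$ and the determinant on the normalized cyclic cover over $y$ differ by $\varepsilon^{-r}(\sigma)$ from the toric part, which is harmless. That normalized cover is a cyclic cover of $\mathbf{P}^1$ branched along a single orbit, hence a genuine $\mathbf{Q}$-point of $\mathcal{C}_{l-1,1}$. Only at that point can Step 1 spread the conclusion to the whole family.

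Finally, $Y$ should be the degree-$l$ subcover of $X_1(l^2)\to X_0(l^2)$ cut out by diamond operators, whose deck group is defined over $\mathbf{Q}$. Quotients by automorphisms like $\tau\mapsto\tau+\frac{1}{l}$ are only defined over $\mathbf{Q}(\zeta_l)$. With the diamond-operator subcover, all $\chi_f$ on a given $\zeta_l$-eigenspace equal one character of order $l$, so no pairing of $\chi_f$ with $\bar{\chi}_f$ is needed.
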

	\section{Determinant cannot be $-1$}
	In this section, we will prove propositions \ref{odd} and \ref{odd'}. The proof will be uniform in the linear and unitary cases.
	To prove that $\psi$ has odd order, we write $\psi = \delta \psi'$, where $\delta$ has order dividing two, and $\psi'$ has order dividing $l$. First, we make the following observation to show that if $\delta$ is trivial on one curve in the family, then it is trivial for all the others:
	\begin{proposition} \label{indep}
		Let $F$ be a field, let $X$ be a connected algebraic variety over $F$, and let $\mathcal{L}$ be an \'etale local system of finite sets on $X$. If the base change of $\mathcal{L}$ to $X_{\overline{F}}$ is trivial, and for some rational point $x_0 \in X(F)$, the fiber $\mathcal{L}_{x_0}$ has a trivial action of $G_F$, then $\mathcal{L}$ is trivial.
	\end{proposition}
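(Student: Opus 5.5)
We will translate the statement into one about the étale fundamental group and then use the homotopy exact sequence. Choose a geometric point $\bar{x}_0$ lying over the rational point $x_0$. Because $X$ is connected, the étale local system $\mathcal{L}$ of finite sets is the same thing as a finite set $S = \mathcal{L}_{\bar{x}_0}$ equipped with a continuous action of $\pi_1^{\et}(X, \bar{x}_0)$. Saying that $\mathcal{L}$ is trivial means precisely that this action is trivial. The first step is to recall the fundamental exact sequence
\[ 1 \to \pi_1^{\et}(X_{\overline{F}}, \bar{x}_0) \to \pi_1^{\et}(X, \bar{x}_0) \to G_F \to 1. \]
This sequence is valid when $X$ is geometrically connected. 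We may assume geometric connectedness here: a connected variety over $F$ that has a rational point is automatically geometrically connected, since its field of constants embeds into the residue field $F$ at $x_0$ and so equals $F$.

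The second step uses the rational point to split the sequence. The point $x_0 \in X(F)$ is a morphism $\mathrm{Spec}\, F \to X$, and functoriality of $\pi_1^{\et}$ turns it into a section $s : G_F \to \pi_1^{\et}(X,\bar{x}_0)$ of the projection onto $G_F$. The action of $G_F$ on the fibre $\mathcal{L}_{x_0}$ is exactly the pullback of the monodromy action along $s$. Consequently, $\pi_1^{\et}(X,\bar{x}_0)$ is generated by the two subgroups
\begin{itemize}
\item the normal subgroup $\pi_1^{\et}(X_{\overline{F}},\bar{x}_0)$, and
\item the image $s(G_F)$,
\end{itemize}
so that $\pi_1^{\et}(X,\bar{x}_0) = \pi_1^{\et}(X_{\overline{F}},\bar{x}_0) \rtimes s(G_F)$.

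The third step checks triviality on each piece. The hypothesis that $\mathcal{L}$ becomes trivial after base change to $X_{\overline{F}}$ says that the geometric subgroup acts trivially on $S$. The hypothesis on the fibre at $x_0$ says that $s(G_F)$ acts trivially on $S$. Since these two subgroups generate the whole group, $\pi_1^{\et}(X,\bar{x}_0)$ acts trivially, and therefore $\mathcal{L}$ is trivial (constant).

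The only step needing real care is the first: we must justify the exact sequence and identify the fibre action with the pullback along $s$. This is standard material, namely SGA1, Exposé IX, Théorème 6.1, which applies to $X$ quasi-compact and geometrically connected over $F$. We will simply cite it, noting that geometric connectedness follows from connectedness together with the existence of the rational point, as explained above. In the application that follows, $X$ is the rational parameter space $\mathcal{C}_{\mu,d}$ and $\mathcal{L}$ is the $\mu_2$-torsor encoding $\delta$, so all of these hypotheses are satisfied.
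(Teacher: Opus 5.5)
Your proof is correct and follows the same argument as the paper: the rational point splits the fundamental exact sequence, so $\pi_1^{\et}(X)$ is generated by $\pi_1^{\et}(X_{\overline{F}})$ and the image of $G_F$, and an action that is trivial on both subgroups is trivial. You also justify geometric connectedness from connectedness plus the existence of a rational point, which the exact sequence needs and the paper leaves implicit.
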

	\begin{proof}
		Indeed, a rational point $x_0$ gives a decomposition $\pi_1^{\et}(X)$ as a semidirect product of $\pi_1^{\et}(X_{\overline{F}})$ and $G_F$, and an \'etale local system of finite sets can be interpreted as a group homomorphism $\pi_1^{\et}(X) \to S_n$. If a group homomorphism is trivial on both $\pi_1^{\et}(X_{\overline{F}})$ and $G_F$, then it is trivial.
	\end{proof}
	Unfortunately, there is no explicit example of a cyclic covering of $\mathbf{P}^1$, where the \'etale cohomology can be easily computed. Thus, we resort to a degeneration technique. Recall 
	\begin{proposition} \cite[9.2.2, example 8]{neron}
		Let $X$ be a semistable curve over a field $F$. Then we have an exact sequence 
		$$0 \to T \to  \mathrm{Pic}^0_X \to \prod_i \mathrm{Pic}^0_{X_i} \to 0$$, where $X_i$ are the normailsations of the irreducible components of $X$, and $T$ is a torus Cartier-dual to the first cohomology group of the graph of the connected components.
	\end{proposition}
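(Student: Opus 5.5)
This is the standard structure theorem for Jacobians of semistable curves, cited from \cite{neron}. I would prove it by analysing the normalization map $\nu : \tilde X = \bigsqcup_i X_i \to X$ on the level of line bundles. After base change to $\overline{F}$ we may assume the nodes and components are defined over the ground field. At the end we descend, since all maps are canonical and hence Galois-equivariant; this is why $T$ is only a \emph{form} of a split torus, with character group the Galois module $H_1$ of the dual graph.

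\textbf{Step 1 (sheaf sequence).} Let $\Sigma$ be the set of nodes. Write $\tilde\Sigma = \nu^{-1}(\Sigma)$, which has two preimages $p_s^{\pm}$ over each $s \in \Sigma$. On $X$ there is an exact sequence of sheaves of groups
$$1 \to \mathcal{O}_X^\times \to \nu_*\mathcal{O}_{\tilde X}^\times \to \bigoplus_{s \in \Sigma} \mathbf{G}_{m,s} \to 1.$$
The last map sends a unit $f$ to $f(p_s^+)/f(p_s^-)$ at each node, using an orientation of each edge of the graph. Exactness follows because a node is étale-locally the union of two coordinate axes. A function on $\tilde X$ descends to $X$ exactly when its values at the two branches agree.

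\textbf{Step 2 (cohomology).} Take cohomology. We have $H^0(X,\mathcal{O}_X^\times) = \mathbf{G}_m^{c}$, where $c$ is the number of connected components (one for a connected curve). Similarly $H^0(\tilde X, \mathcal{O}^\times) = \mathbf{G}_m^{V}$, with $V$ the set of irreducible components. We also use $H^1(X, \nu_*\mathcal{O}^\times) = \mathrm{Pic}(\tilde X)$, since $\nu$ is finite. This gives the exact sequence
$$1\to \mathbf{G}_m^{c} \to \mathbf{G}_m^{V} \to \mathbf{G}_m^{E} \to \mathrm{Pic}(X) \to \prod_i \mathrm{Pic}(X_i) \to H^1(X, \textstyle\bigoplus_s \mathbf{G}_{m,s}) = 0.$$
The map $\mathbf{G}_m^V \to \mathbf{G}_m^E$ is the coboundary of the dual graph $\Gamma$ tensored with $\mathbf{G}_m$. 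Hence the kernel of $\mathrm{Pic}(X)\to\prod\mathrm{Pic}(X_i)$ is $\mathrm{coker}(\mathbf{G}_m^V\to\mathbf{G}_m^E) = H^1(\Gamma,\mathbf{Z})\otimes\mathbf{G}_m$. This is a torus $T$ whose character group is $\mathrm{Hom}(H^1(\Gamma,\mathbf{Z}),\mathbf{Z}) = H_1(\Gamma,\mathbf{Z})$. That is exactly the statement that $T$ is Cartier dual to the (co)homology lattice of the graph.

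\textbf{Step 3 (degree zero and representability).} Restrict to $\mathrm{Pic}^0$, i.e. multidegree zero on every component, which is the identity component for a semistable curve. The torus $T$ lies in the multidegree-zero part, because pulling back to each $X_i$ gives the trivial bundle. Surjectivity onto $\prod_i \mathrm{Pic}^0_{X_i}$ also survives: any tuple of degree-zero bundles on the $X_i$ is glued to a bundle on $X$ by choosing identifications of the fibres at $p_s^\pm$. To upgrade the argument from $\overline F$-points to group schemes, I would run the same argument functorially over an arbitrary $F$-scheme $S$, using the relative Picard functor and the fact that $H^1$ of a finite skyscraper sheaf vanishes. Finally, all maps are canonical, so the sequence descends to $F$.

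The main obstacle I expect is Step 3. The pointwise computation is easy, but making it scheme-theoretic, with exactness of fppf sheaves and the correct identification of $\mathrm{Pic}^0$ versus multidegree zero, is where one must lean on the representability results of \cite{neron}. For the intended application only the $\overline{F}$-points and the Galois action on the graph homology matter, so the pointwise argument plus equivariance suffices.
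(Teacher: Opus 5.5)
Your proposal is correct and follows the standard argument behind the cited result in \cite{neron}; the paper gives no proof of its own beyond this reference. That argument is the unit-sheaf sequence for the normalization, the identification of $\mathrm{coker}(\mathbf{G}_m^V\to\mathbf{G}_m^E)$ with $H^1(\Gamma,\mathbf{Z})\otimes\mathbf{G}_m$ via the coboundary of the dual graph, restriction to multidegree zero, and Galois descent. One small point: your computation shows that the character group of $T$ is $H_1(\Gamma,\mathbf{Z})$ and the cocharacter group is $H^1(\Gamma,\mathbf{Z})$, so the statement's phrase ``Cartier-dual to the first cohomology group'' is accurate only up to this duality, and you should say that explicitly rather than hedging with ``(co)homology''.
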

	\begin{proposition} \cite[Proposition 2.2.5]{sga}
		Let $R$ be a discrete valuation ring with fraction field $K$ and residue field $k$. Let $A$ be an abelian variety over $K$ with semistable reduction. Then for any prime number $p$ not equal to the characteristic of $k$, the module $T_p(A)/T_p(A)^{I_K}$ is Cartier-dual to $T_p(T')$, where $T'$ is the toric part of the dual abelian variety.
	\end{proposition}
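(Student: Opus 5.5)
The plan is to deduce the statement from the perfect Weil pairing between $T_p(A)$ and $T_p(A')$, where $A'$ is the dual abelian variety. We may replace $R$ by its strict henselisation, which changes neither $I_K$ nor the Néron models or their toric parts. Write $e_p : T_p(A) \times T_p(A') \to \mathbf{Z}_p(1)$ for the Weil pairing, which is perfect and $G_K$-equivariant. If we show that the orthogonal complement of $T_p(A)^{I_K}$ under $e_p$ is exactly $T_p(T') \subset T_p(A')$, then $e_p$ induces a perfect pairing
$$T_p(A)/T_p(A)^{I_K} \times T_p(T') \to \mathbf{Z}_p(1),$$
which is the claimed Cartier duality. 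So the proof splits into describing the invariants, proving one orthogonality inclusion, and counting ranks.

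\textbf{Invariants.} Let $\mathcal{A}$ and $\mathcal{A}'$ be the Néron models over $R$. Since $p$ is invertible on $R$ and $\mathcal{A}$ is smooth, Hensel's lemma identifies $A[p^m](K^{\mathrm{sh}})$ with $\mathcal{A}_k[p^m](\bar k)$. Passing to the limit, $T_p(A)^{I_K} = T_p(\mathcal{A}_k)$. The component group is finite, so it contributes nothing to the Tate module, and $T_p(\mathcal{A}_k) = T_p(\mathcal{A}_k^0)$. By semistability, $\mathcal{A}_k^0$ is an extension of an abelian variety $B$ of dimension $a$ by a torus $T$ of dimension $t$. Hence $T_p(A)^{I_K}$ is a saturated submodule of rank $2a+t$. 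In the same way, $T_p(T') \subset T_p(\mathcal{A}'^0_k) = T_p(A')^{I_K}$ is saturated of rank $t'$, and $t' = t$ because $A$ and $A'$ are isogenous.

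\textbf{Orthogonality.} Next we show that $e_p(T_p(A)^{I_K}, T_p(T')) = 0$; this is the main obstacle. The approach is Grothendieck's: the Weil pairing comes from the Poincaré biextension of $(A, A')$ by $\mathbf{G}_m$, and this extends to a biextension of $(\mathcal{A}^0, \mathcal{A}'^0)$ by $\mathbf{G}_m$ over $R$. Consequently, on $p^m$-torsion of the identity components, $e_{p^m}$ is computed on the special fibre by a bilinear pairing
$$\mathcal{A}^0_k[p^m] \times \mathcal{A}'^0_k[p^m] \to \mu_{p^m}$$
arising from a biextension of $(\mathcal{A}^0_k, \mathcal{A}'^0_k)$ by $\mathbf{G}_m$. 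Restricted to $\mathcal{A}^0_k \times T'$, this is a biextension of a connected smooth group by a torus. We then argue that its Weil-type pairing on torsion vanishes. The reason is that such biextensions correspond to maps $T' \to \mathrm{Ext}^1(\mathcal{A}^0_k, \mathbf{G}_m)$ factoring through characters, and the torsion of a torus pairs trivially because $\mathrm{Hom}(\mu_{p^m}, \mathbf{G}_m)$-valued pairings coming from such extensions are killed by the connectedness of $\mathcal{A}^0_k$. If the biextension formalism proves too heavy, a fallback is to check the orthogonality after a finite extension giving split semistable reduction. There one can use Raynaud's rigid uniformisation $A = E/\Lambda$ and verify directly that the torus part of the dual pairs trivially with $E[p^m]$.

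\textbf{Rank count.} Since $e_p$ is perfect, the orthogonal complement of $T_p(A)^{I_K}$ is saturated of rank
$$2g - (2a+t) = t,$$
using $g = a + t$. It contains the saturated rank-$t$ submodule $T_p(T')$. Two saturated submodules of the same rank, one contained in the other, are equal, so the complement is exactly $T_p(T')$. This gives the perfect pairing displayed above and completes the argument.
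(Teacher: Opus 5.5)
The paper gives no proof of this statement; it quotes it from SGA~7, Exp.~IX. Your outline is essentially Grothendieck's argument there: identify $T_p(A)^{I_K}$ with $T_p(\mathcal{A}^0_k)$, get $T_p(A)^{I_K}\perp T_p(T')$ by extending the Poincar\'e biextension to $\mathcal{A}^0\times_R\mathcal{A}'^0$, then compare ranks. So it is correct and follows the same route.

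The one thing to repair is your justification of the vanishing on $\mathcal{A}^0_k\times T'$, which is garbled as written. The clean reason is that $\underline{\mathrm{Ext}}^1(T',\mathbf{G}_m)=0$, so a biextension of $(\mathcal{A}^0_k,T')$ by $\mathbf{G}_m$ amounts to an extension of $\mathcal{A}^0_k$ by the \'etale lattice $\underline{\mathrm{Hom}}(T',\mathbf{G}_m)$. Such an extension splits because $\mathcal{A}^0_k$ is connected, so the biextension is trivial, and hence so is its pairing on $p^m$-torsion.
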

	%add reference for why inertia invariants are H^1 of special fiber
	For Jacobians of curves, we can combine the two propositions using the facts that Jacobians are principally polarized to get:
	\begin{proposition} \label{degen}
		Let $R$ be a discrete valuation ring with fraction field $K$ and residue field $k$. Let $X$ be a curve over $K$ with semistable model $\mathcal{X}$ over $R$. Then for any prime number $p$ not equal to the characteristic of $k$, we have two exact sequences
		$$0 \to T_p(T)(-1) \to H^1(X_{\overline{K}}; \mathbf{Z}_p) \to H^1(\mathcal{X}_{\overline{k}}; \mathbf{Z}_p) \to 0$$
		$$0 \to \bigoplus H^1((X_i)_{\overline{k}}; \mathbf{Z}_p) \to H^1(\mathcal{X}_{\overline{k}}; \mathbf{Z}_p) \to T_p(T)^{\vee} \to 0$$, 
		where $X_i$ are the normalisations of the irreducible components of $\mathcal{X}_k$.
	\end{proposition}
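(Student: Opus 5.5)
We will derive Proposition~\ref{degen} by combining the two preceding results and dualizing them using the principal polarization of the Jacobian; throughout we identify $H^1(X_{\overline{K}};\mathbf{Z}_p)$ with $T_p(J)^\vee \cong T_p(J)(-1)$, where $J=\mathrm{Pic}^0_X$.

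\textbf{The first sequence.} Let $J$ be the Jacobian of $X$. Since $X$ has a semistable model $\mathcal{X}$, $J$ has semistable reduction, and its N\'eron model has identity component $\mathrm{Pic}^0_{\mathcal{X}/R}$ (Raynaud's theorem, as in \cite{neron}).
\begin{enumerate}
\item Apply the cited result from \cite{sga} to $A=J$. Since $J$ is principally polarized, $J\cong J^\vee$, so $T'$ is identified with the toric part $T$ of the special fibre of the N\'eron model. This gives $T_p(J)/T_p(J)^{I_K}\cong \mathrm{Hom}(T_p(T),\mathbf{Z}_p(1))$.
\item Dualize the exact sequence $0\to T_p(J)^{I_K}\to T_p(J)\to T_p(J)/T_p(J)^{I_K}\to 0$ by applying $\mathrm{Hom}(-,\mathbf{Z}_p)$. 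The dual of the quotient is $T_p(T)(-1)$, and the dual of $T_p(J)$ is $H^1(X_{\overline K};\mathbf{Z}_p)$.
\item Identify the dual of the inertia invariants $T_p(J)^{I_K}$ with $H^1(\mathcal{X}_{\overline k};\mathbf{Z}_p)$. For this, use the standard fact that $T_p(J)^{I_K}=T_p(\mathcal{J}_k)$, the Tate module of the special fibre of the N\'eron model (since $p\ne \mathrm{char}\,k$ and the N\'eron model is \'etale-locally smooth). Then note that $T_p(\mathcal{J}^0_k)=T_p(\mathrm{Pic}^0_{\mathcal{X}_k})$, which is dual to $H^1(\mathcal{X}_{\overline k};\mathbf{Z}_p)(1)$ by the Kummer sequence on the proper curve $\mathcal{X}_{\overline k}$.
\end{enumerate}
The Tate twists must be tracked consistently in steps 2 and 3; after doing so the first sequence follows. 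Torsion-freeness and surjectivity are automatic, because all the modules involved are finite free over $\mathbf{Z}_p$.

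\textbf{The second sequence.} Apply the exact sequence from \cite[9.2.2]{neron} to the semistable curve $\mathcal{X}_{\overline k}$: $0\to T\to \mathrm{Pic}^0_{\mathcal{X}_{\overline k}}\to\prod_i \mathrm{Pic}^0_{X_i}\to 0$. Taking $p$-adic Tate modules is exact here, because all three groups are $p$-divisible and $T$ is connected. This gives $0\to T_p(T)\to T_p(\mathrm{Pic}^0_{\mathcal{X}})\to\bigoplus_i T_p(\mathrm{Pic}^0_{X_i})\to 0$. Dualize and twist by $(-1)$, using $T_p(\mathrm{Pic}^0_Y)^\vee(-1)\cong H^1(Y;\mathbf{Z}_p)$ for $Y=\mathcal{X}_{\overline k}$ and for each $X_i$. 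The result is the second sequence.

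Alternatively, the second sequence follows from the normalization map $\nu:\coprod X_i\to\mathcal{X}_{\overline k}$. The short exact sequence $0\to\mathbf{Z}_p\to\nu_*\mathbf{Z}_p\to\bigoplus_{\text{nodes}}\mathbf{Z}_p\to 0$ gives a long exact sequence whose $H^0$ terms produce $H^1$ of the dual graph, i.e.\ the character group of $T$. This second route is a good consistency check on the twist.

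The main obstacle is step 3: identifying the inertia invariants with the cohomology of the special fibre of $\mathcal{X}$, and keeping the Tate twists correct throughout. I expect to cite SGA~7 (Grothendieck's orthogonality theorem together with Raynaud's description of $\mathrm{Pic}^0$ of the semistable model) rather than reprove it. The statement as written drops twists on the Cartier-dual term, and these twists should be adjusted accordingly.
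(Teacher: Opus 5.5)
Your route is the one the paper intends: Grothendieck's orthogonality theorem plus the principal polarization, then the N\'eron-model sequence. Steps 1--2 are fine. The identification in step 3, however, is false, and the argument depends on it.

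The Kummer sequence on $\mathcal{X}_{\overline k}$ gives an \emph{isomorphism} $T_p(\mathrm{Pic}^0_{\mathcal{X}_{\overline k}})\cong H^1(\mathcal{X}_{\overline k};\mathbf{Z}_p(1))$, not a duality. Moreover, the Tate module of the semi-abelian variety $\mathrm{Pic}^0_{\mathcal{X}_{\overline k}}$ is not self-dual up to twist once $t=\dim T>0$: it has $T_p(T)$ as a submodule, whereas its dual has $T_p(T)^\vee$ as a quotient. Hence $(T_p(J)^{I_K})^\vee\cong H^1(\mathcal{X}_{\overline k};\mathbf{Z}_p)^\vee(-1)$, not $H^1(\mathcal{X}_{\overline k};\mathbf{Z}_p)$. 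In fact $H^1(\mathcal{X}_{\overline k};\mathbf{Z}_p)=T_p(J)^{I_K}(-1)=H^1(X_{\overline K};\mathbf{Z}_p)^{I_K}$ is a \emph{submodule}. What actually holds is $0\to H^1(\mathcal{X}_{\overline k};\mathbf{Z}_p)\to H^1(X_{\overline K};\mathbf{Z}_p)\to T_p(T)^\vee\to 0$ together with $0\to T_p(T)(-1)\to H^1(\mathcal{X}_{\overline k};\mathbf{Z}_p)\to\bigoplus_i H^1((X_i)_{\overline k};\mathbf{Z}_p)\to 0$. Alternatively, the two displayed sequences become correct once $H^1(\mathcal{X}_{\overline k};\mathbf{Z}_p)$ is replaced by $T_p(\mathrm{Pic}^0_{\mathcal{X}_{\overline k}})^\vee$. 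That is what your steps 1--2, and the untwisted dual of the N\'eron sequence, actually produce.

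Read literally, with $H^1(\mathcal{X}_{\overline k};\mathbf{Z}_p)$ meaning \'etale cohomology of the special fibre, the statement fails as $G_k$-modules. Take a Tate curve over $K$ with residue field $\mathbf{Q}$ and nodal-cubic special fibre. Then $H^1(\mathcal{X}_{\overline k};\mathbf{Z}_p)=\mathbf{Z}_p$ with trivial action, while $H^1(X_{\overline K};\mathbf{Z}_p)/T_p(T)(-1)\cong\mathbf{Z}_p(-1)$. So no correct proof of step 3 exists as stated.

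The same confusion affects your treatment of the second sequence. The rule $T_p(\mathrm{Pic}^0_Y)^\vee(-1)\cong H^1(Y;\mathbf{Z}_p)$ is off by a twist even for smooth $Y$; your own opening convention gives $H^1(Y;\mathbf{Z}_p)\cong T_p(\mathrm{Pic}^0_Y)^\vee$. For nodal $Y$ only $H^1(Y;\mathbf{Z}_p)\cong T_p(\mathrm{Pic}^0_Y)(-1)$ holds. Your remark that the statement drops twists on the Cartier-dual term is therefore misplaced. With $\Gamma$ the dual graph, $T_p(T)(-1)\cong H^1(\Gamma;\mathbf{Z}_p)$ and $T_p(T)^\vee\cong H_1(\Gamma;\mathbf{Z}_p)(-1)$ carry exactly the right twists; it is the middle term that is mislabeled.

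Your proposed normalization check would have exposed this. It exhibits $H^1(\Gamma;\mathbf{Z}_p)$ as a submodule and $\bigoplus_i H^1((X_i)_{\overline k};\mathbf{Z}_p)$ as a quotient of $H^1(\mathcal{X}_{\overline k};\mathbf{Z}_p)$, the reverse of the displayed order.

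For the paper's only use of this proposition nothing is lost. In either correct form, multiplicativity of determinants gives $\det H^1(X_{\overline K};\mathbf{Z}_p)=\varepsilon^{-t}\prod_i\det H^1((X_i)_{\overline k};\mathbf{Z}_p)$, since the graph contributes $\det(T_p(T)(-1))\cdot\det(T_p(T)^\vee)=\varepsilon^{-t}$.
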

	For any odd prime number $l$, the map $X_1(l^2) \to X_0(l^2)$ is a Galois covering with Galois group $(\mathbf{Z}/l^2 \mathbf{Z})/\{\pm 1\}$. Both curves and all the deck transformations are defined over $\mathbf{Q}$. As the group $(\mathbf{Z}/l^2 \mathbf{Z})/\{\pm 1\}$ has a unique quotient of order $l$, there exists a unique intermediate Galois covering $Y \to X_0(l^2)$ of degree $l$. \par 
	Recall that $X_0(l^2)$ has $l+1$ cusps: two rational cusps corresponding to a cuspidal cubic with marked $\mu_{l^2}$, and a N\'eron $l^2$-gon; and $l-1$ cusps defined over $\mathbf{Q}(\zeta_l)$ coresponding to N\'eron $l$-gons. The covering maps $X_1(l^2) \to X_0(l^2)$ and $Y \to X_0(l^2)$ are ramified precisely at $l-1$ non-rational cusps. The cohomology of $Y$ decomposes into the sum of two-dimensional Galois representations corresponding to various modular forms of weight $2$ level $l^2$. The determinant of these Galois representation is $\varepsilon$ times the Nebentypus character, which is always of odd order. So the analogue of proposition \ref{odd} is true in this example. \par 
	I claim that I can deduce proposition \ref{odd} in general by degenerating the marked curve $(X_0(l^2), \mathrm{non-rational \ cusps})$ to a graph of rational curves in such a way that all the markings are on the same irreducible component.
	\begin{proposition}
		Let $F$ be field of characteristic zero, let $X$ be a smooth connected quasiprojective algebraic variety over $F$, and let $x, y \in X(F)$ be two rational points. Then there exists a smooth irreducible curve on $X$ defined over $F$ and passing through $x$ and $y$.
	\end{proposition}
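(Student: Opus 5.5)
The plan is to reduce to the case of curves by a projective embedding and a Bertini-type argument, keeping both points and the ground field $F$ fixed throughout. Since $X$ is quasiprojective, choose an embedding $X \hookrightarrow \mathbf{P}^N_F$ with closure $\overline{X}$, and set $d = \dim X$. If $d \le 1$ there is nothing to prove: $X$ is itself a smooth connected curve, and connectedness together with smoothness gives irreducibility. For $d \ge 2$ we argue by induction on $d$. The key step is to find a hypersurface section $H$ of sufficiently large degree, defined over $F$, containing $x$ and $y$, such that $H \cap X$ is smooth, irreducible and of dimension $d-1$. Intersecting $d-1$ such hypersurfaces in succession then produces the required curve.

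To construct $H$, blow up $X$ at the two points. Let $\pi:\widetilde{X}\to X$ be the blow-up at $\{x,y\}$, with exceptional divisors $E_x, E_y$. For $m \gg 0$ the linear system $|\mathcal{O}(m)\otimes I_{\{x,y\}}|$ on $\overline{X}$ pulls back to a linear system on $\widetilde{X}$ of the form $|\pi^*\mathcal{O}(m) - E_x - E_y|$. On a neighbourhood of $\pi^{-1}(X)$ this system is base-point free, and it separates enough points and tangents to give a morphism that is unramified onto its image (indeed an immersion). This follows from Serre vanishing applied to the ideal sheaves $I_{\{x,y\}}I_z$ and $I_{\{x,y\}}I_z^2$. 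Two classical theorems then apply to the general member $\widetilde{H}$ of this system:
\begin{itemize}
\item Bertini's smoothness theorem in characteristic zero (Kleiman) shows that $\widetilde{H}$ is smooth.
\item Jouanolou's Bertini irreducibility theorem shows that $\widetilde{H}$ is geometrically irreducible, since the image of the associated map has dimension $d \ge 2$.
\end{itemize}
Take $H$ to be the image of $\widetilde{H}$ in $X$. Away from $x$ and $y$ it coincides with $\widetilde{H}$, so $H$ is smooth and irreducible there. To see that $H$ is smooth at $x$ and $y$ themselves, impose the additional open condition that the linear part of the defining section at each point is nonzero. This condition is nonempty, because the restriction map from sections to $\mathfrak{m}_x/\mathfrak{m}_x^2 \oplus \mathfrak{m}_y/\mathfrak{m}_y^2$ is surjective for $m \gg 0$. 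Equivalently, $\widetilde{H}$ meets each exceptional divisor transversally in a hyperplane, and $\pi$ restricted to $\widetilde{H}$ is an isomorphism near those points.

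All of these conditions are Zariski-open and nonempty in the projective space of sections, which is a projective space over $F$. Since $F$ is infinite, being of characteristic zero, the $F$-rational points are Zariski-dense there, so a member defined over $F$ satisfying every condition exists. Iterating the construction $d-1$ times on $X \cap H_1 \cap \cdots$ then yields a smooth, geometrically irreducible curve over $F$ through both $x$ and $y$.

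The main obstacle is the irreducibility of $H$ while it is forced to pass through two prescribed points. The general Bertini irreducibility theorem requires the map defined by the linear system to have image of dimension at least $2$. Passing to the blow-up exactly resolves the base locus $\{x,y\}$ and restores this hypothesis, so checking this condition carefully on $\widetilde{X}$ is the step I expect to require the most care.
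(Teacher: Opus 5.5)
Your proof is correct, and its overall strategy is the paper's. You cut $X$ by a general member of a linear system through $x$ and $y$, use Bertini for smoothness away from the two points, impose an open transversality condition at $x$ and $y$, pick an $F$-rational member by density, and induct on $\dim X$.

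Where you differ is in execution, and the difference matters:
\begin{itemize}
\item The paper uses hyperplanes through $x,y$ in a fixed embedding. It gets irreducibility as ``connected plus smooth'', citing Hartshorne's Remark III.10.9.2 and Ex.~III.11.3.
\item You use hypersurfaces of degree $m\gg 0$ and blow up the two points to remove the base locus. You then apply Kleiman's Bertini and Jouanolou's irreducibility theorem on the quasi-projective blow-up $\widetilde{X}$.
\end{itemize}

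Your choices buy robustness, for two reasons. First, the hyperplane system through $x,y$ has base locus $X\cap\overline{xy}$, which can be larger than $\{x,y\}$. Take a smooth quadric surface in $\mathbf{P}^3$ with $x,y$ on a ruling line $L$. Every plane through $x,y$ contains $L$ and cuts out $L\cup L'$, which is reducible and singular at a point of $L$ other than $x,y$. So the paper's argument fails as literally written unless the embedding is first composed with a Veronese map, which is exactly what your choice $m\gg 0$ amounts to. Second, Ex.~III.11.3 needs a base-point-free system on a normal projective variety. Connectedness of a section of $\overline{X}$ also need not survive restriction to the open part in $X$. Jouanolou instead gives geometric irreducibility directly on $\widetilde{X}$. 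The paper's version is shorter, but yours builds in exactly the repairs it needs.

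One simplification: you do not need the immersion claim. Its justification via Serre vanishing for $I_{\{x,y\}}I_z$ and $I_{\{x,y\}}I_z^2$ is not uniform in $z$ as written. Kleiman in characteristic zero only needs base-point freeness on $\widetilde{X}$, and Jouanolou only needs $\dim\phi(\widetilde{X})\geq 2$. For $m\geq 3$ both follow from products of three linear forms. These already separate the points of $X\setminus\{x,y\}$ and have no base points on $E_x\cup E_y$.
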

	\begin{proof}
		If $X$ is a curve, there is nothing to prove. Otherwise, fix a projective embedding of $X \to \mathbf{P}^N$. Consider the space of hyperplanes passing through $x$ and $y$. By \cite[Remark 10.9.2, Exercise 11.3]{heartshorne}, the locus of hyperplanes such that the section is connected and smooth away from $x$ and $y$ is open and nonempty. Because $X$ is smooth, the tangent space $T_x(X)$ is not the whole space $T_x(\mathbf{P}^N)$. Because $\dim X \geq 2$, this tangent space cannot be contained within the line through $x$ and $y$. Hence the locus of hyperplanes transverse to $T_x(X)$ is open and nonempty. Intersecting three open nonempty loci, we obtain a generic set of hyperplanes such that the corresponding section is connected smooth everywhere.
	\end{proof}
	Now we can prove proposition \ref{odd} as follows:
	\begin{proof}
		It suffices to prove that the determinant is of the form $(\text{power of } \varepsilon) \psi$, where $\psi$ is of odd order.
		First, we consider the case $n=l-3$, when the covering of $\mathbf{P}^1$ is ramified at a single orbit. Let $g$ be the genus of $X_0(l^2)$. In the twisted moduli space $\overline{\mathcal{M}}$ representing the functor $\overline{\mathcal{M}} (R) = \{(X, x_1, \ldots, x_{l-1}), X \text{ is a stable genus } g \text{ curve over } R \otimes_{\mathbf{Q}} \mathbf{Q}(\zeta_l), \ x_i \in X(R \otimes_{\mathbf{Q}} \mathbf{Q}(\zeta_l)), \forall i, \sigma_0(x_i) = x_{i+1} \}$, we may find a curve $C$ passing through two points:
		\begin{itemize}
			\item the point $x=(X_0(l^2), \mathrm{non-rational \ cusps})$;
			\item a point  $y$ corresponding to a rational curve with $g$ nodes, with all the markings forming a unique $\Gal(\mathbf{Q}(\zeta_l)/\mathbf{Q})$-orbit.
		\end{itemize}
	In proposition \ref{degen}, take $R$ to be the completed local ring $\widehat{\mathcal{O}}_{C,y}$. Denote by $\eta$ the generic point of $C$, and denote by $X_{\eta}$, $X_y$ the normalisations of cyclic coverings of the corresponding curves branched at exactly the marked points. The absolute Galois group of the residue field $\mathbf{Q}((t))$ of $\eta$ contains a canonical copy of $\GalQ$, which we identify with its image. By proposition \ref{degen}, for any $\sigma \in \GalQ$, $\det (\sigma | H^1(X_{\eta, \overline{\mathbf{Q}((t))}}; \mathbf{Z}_p)) = \varepsilon^{-r}(\sigma) \det (\sigma | H^1(X_{y, \overline{\mathbf{Q}}}); \mathbf{Z}_p)$, where $r$ is the number of nodes of $X_y$. Applying the same reasoning for the completed local ring at $x$, we get $\det(\sigma | H^1(X_{x, \Qbar}; \mathbf{Z}_p))=  \varepsilon^{-r}(\sigma) \det(\sigma|H^1(X_{y, \Qbar}; \mathbf{Z}_p))$. Therefore, $X_y$ is an example of a cyclic covering of $\mathbf{P}^1$ where the determinant of our Galois representation has the form $(\text{power of }\varepsilon) \times \psi$, where $\psi$ is of odd order. Hence, by proposition \ref{indep}, for all the cyclic coverings of $\mathbf{P}^1$ ramified at a single orbit, the determinant has the form $(\text{power of } \varepsilon)\times \psi$, where $\psi$ is of odd order.\par 
	Now let $Y$ be a cyclic covering of $\mathbf{P}^1$ branched at more than one orbit. Find a curve $C$ in the moduli space of curves of genus $0$ with marked $\Gal(\mathbf{Q}(\zeta_l)/\mathbf{Q})$-orbits passing through two points:
	\begin{itemize}
		\item the point $(\mathbf{P}^1, \text{branch locus of } Y \to \mathbf{P}^1)$;
		\item A chain of $\mathbf{P}^1$'s with markings on any component forming a single orbit.
	\end{itemize}
	Let $Y_i$ be cyclic coverings of the components of the latter curve branched at the markings. By applying the same reasoning as before, we get that $\det(\sigma | H^1(Y, \mathbf{Z}_p)) = \varepsilon^{-r}(\sigma) \prod \det(\sigma | H^1(Y_i, \mathbf{Z}_p))$. From the previous case, we know the right hand side to be a power of $\varepsilon$ times the character of odd order, hence the left hand side is of the same form.
	\end{proof}
	\begin{proof} (of theorem \ref{final})
		Let $l$ be an odd prime, let $n$ be a natural number, not congruent to $-1$ mod $l$. Let $e = \mathrm{ord}\ p \mod l$. Consider the Galois representations constructed in theorem \ref{main} with $\mu = l-1$ and $n = (l-1)n - 2$. If $\frac{l-1}{e}$ is even, it follows from the propositions \ref{linear} and \ref{odd'} that these Galois representations have projective image $\PSL(n,p^e)$. \par 
		If $\frac{l-1}{e}$ is odd, we sometimes have to verify extra congruence conditions. If $e$ is a multiple of $4$, then $p^{\frac{e}{2}}$ is a square, and \ref{unitary} applies. Otherwise, let $r = v_2(p+1)$, and assume that $\frac{l-1}{2} n \equiv 1 \mod 2^r$. I claim that for $q=p^{\frac{e}{2}}$, we have $\gcd((l-1)n-2,q+1)=\gcd(\frac{(l-1)n-2}{2},q+1)$, so that \ref{unitary} applies. Indeed, the only case when $\gcd((l-1)n-2,q+1)>\gcd(\frac{(l-1)n-2}{2},q+1)$ is when $v_2(q+1) > v_2(\frac{(l-1)n-2}{2})= v_2(\frac{l-1}{2}n - 1)$. I claim that $v_2(q+1) = v_2(p+1)$, so this inequality does not happen. \par 
		To prove this claim, recall that we are in the case where $\frac{e}{2}$ is odd. By assumtion, $p + 1 \equiv 2^r \mod 2^{r+1}$. From this, we obtain $p^{\frac{e}{2}} \equiv (2^r - 1)^{\frac{e}{2}} \equiv (-1)^{\frac{e}{2}} + \frac{e}{2} 2^r \equiv -1 + 2^r \mod 2^{r+1}$, so $p^{\frac{e}{2}} + 1$ is also not divisible by $2^{r+1}$.
	\end{proof}

\end{document}